\newtheorem{prop}{Proposition}[section]
\newtheorem{thm}[prop]{Theorem}
\newtheorem{lem}[prop]{Lemma}
\newtheorem{cor}[prop]{Corollary}
\theoremstyle{definition}
\newtheorem*{defn}{Definition}
\newtheorem*{ex}{Example}
\newtheorem*{exs}{Examples}
\newtheorem{rem}[prop]{Remark}
\newtheorem{rems}[prop]{Remarks}
\newtheorem*{nota}{Notation}
\newtheorem*{ack}{Acknowledgement}
\def\co{\colon\thinspace}
\newcommand{\BB}{\mathcal B}
\newcommand{\C}{\mathbb C}
\newcommand{\CC}{\mathcal C}
\newcommand{\rmd}{\mathrm d}
\newcommand{\D}{\mathbb D}
\newcommand{\rme}{\mathrm e}
\newcommand{\Hp}{\mathbb H}
\newcommand{\N}{\mathbb N}
\newcommand{\bfp}{\mathbf p}
\newcommand{\bfq}{\mathbf q}
\newcommand{\R}{\mathbb R}
\newcommand{\TT}{\mathcal T}
\newcommand{\UU}{\mathcal U}
\newcommand{\wtW}{\widetilde{W}}
\newcommand{\WW}{\mathcal W}
\newcommand{\Z}{\mathbb Z}
\newcommand{\lra}{\longrightarrow}
\newcommand{\ra}{\rightarrow}
\DeclareMathOperator{\diam}{\mathrm{diam}}
\DeclareMathOperator{\ev}{\mathrm{ev}}
\DeclareMathOperator{\Int}{\mathrm{Int}}
\DeclareMathOperator{\loc}{\mathrm{loc}}
\DeclareMathOperator{\st}{\mathrm{st}}
\DeclareMathOperator{\ot}{\mathrm{ot}}
\newcommand{\alst}{\alpha_{\st}}
\newcommand{\xist}{\xi_{\st}}
\newcommand{\lamst}{\lambda_{\st}}
\begin{document}

\author{Hansj\"org Geiges}
\author{Kai Zehmisch}
\address{Mathematisches Institut, Universit\"at zu K\"oln,
Weyertal 86--90, 50931 K\"oln, Germany}
\email{geiges@math.uni-koeln.de, kai.zehmisch@math.uni-koeln.de}


\title[How to recognise a $4$-ball]{How to recognise a $4$-ball when you
see one}

\date{}

\begin{abstract}
We apply the method of filling with holomorphic discs
to a $4$-dimensional symplectic cobordism with the standard contact
$3$-sphere as one convex boundary component. We establish the following
dichotomy: either the cobordism is diffeomorphic to a ball,
or there is a periodic Reeb orbit of quantifiably short
period in the concave boundary of the cobordism.
This allows us to give a unified treatment of
various results concerning Reeb dynamics on contact
$3$-manifolds, symplectic fillability, the topology of symplectic
cobordisms, symplectic non-squeezing, and the non-existence of exact
Lagrangian surfaces in standard symplectic $4$-space.
\end{abstract}

\subjclass[2010]{53D35; 37J45, 57R17}

\maketitle


\section{Introduction\label{intro}}
Ever since the work of Hofer~\cite{hofe93} on the Weinstein
conjecture for overtwisted contact $3$-manifolds, it has been a recurrent
theme in symplectic and contact topology that the non-compactness
of certain moduli spaces of holomorphic discs translates into
the existence of periodic Reeb orbits. For recent work in this
direction see for instance~\cite{alho09}.

The inspiration for Hofer's approach came from
Eliashberg's method of filling with holomorphic discs~\cite{elia90}.
In \cite{geze10} we gave a detailed discussion of that method
in a moduli-theoretic framework. As had been observed by Eliashberg,
a filling of the $4$-ball $D^4$ by holomorphic discs adapted to
a contactomorphism of the boundary $3$-sphere $S^3$ yields
a simple proof of Cerf's theorem that every diffeomorphism of
$S^3$ extends to a diffeomorphism of~$D^4$.

In the present paper we generalise the moduli-theoretic set-up
from \cite{geze10} to a disc-filling of a symplectic cobordism
that has the standard contact $3$-sphere $(S^3,\xist)$
as one convex boundary component. Our main result, which we shall
refer to as the `ball theorem', then says the following.
Either the corresponding moduli space of
holomorphic discs is compact, in which case the symplectic
cobordism has to be the $4$-ball, or there is non-compactness caused
by bubbling-off of holomorphic discs or breaking, in which
case there have to be periodic Reeb orbits in the concave boundary
of the symplectic cobordism. Energy estimates on the holomorphic
discs give rise to estimates on the periods of these Reeb orbits.

This ball theorem may be regarded as a generalisation of the
following fundamental results in $4$-dimen\-sio\-nal symplectic resp.\
$3$-dimensional contact topology:
\begin{itemize}
\item[-] Existence of periodic Reeb orbits on
star-shaped hypersurfaces in~$\R^4$, hypersurfaces of contact type, and
overtwisted contact $3$-manifolds
(Rabinowitz, Viterbo, Hofer).
\item[-] Topology of symplectic fillings of $(S^3,\xist)$
(Gromov, Eliashberg, McDuff).
\item[-] Tightness of weakly symplectically fillable
contact structures (Gromov, Eliashberg).
\item[-] Non-existence of exact Lagrangian surfaces in $\R^4$ (Gromov).
\end{itemize}
Indeed, all these results become straightforward consequences
of the ball theorem.

Our methods also yield some new results on the existence of contractible
periodic Reeb orbits. Moreover, our ball theorem allows us to
define a symplectic capacity via the periods of Reeb orbits
on contact type hypersurfaces. A simple computation of this
capacity for the $4$-ball and the cylinder over the $2$-ball
leads to a proof of
\begin{itemize}
\item[-] Symplectic non-squeezing (Gromov).
\end{itemize}

Conversely, this capacity can be used to give estimates
on the shortest Reeb period. We
recover some examples of Frauenfelder--Ginzburg--Schlenk
and provide additional information about the
periods of contractible orbits.

A precise description of the symplectic cobordisms we
are considering is given in Section~\ref{section:ball},
which also contains the statement of the ball theorem,
including a variant for symplectic cobordisms
with an exact symplectic form. Various corollaries of the ball
theorems, including the ones we just mentioned, will
be proved in Section~\ref{section:corollaries}.
The proof of the ball theorems is given in
Section~\ref{section:proof}, subject to
a compactness result for the relevant moduli space of holomorphic
discs. This compactness result is proved in Section~\ref{section:compact}
after a brief discussion of the Hofer energy in Section~\ref{section:hofer}.
It is worth pointing out that the larger part of our
compactness proof only involves classical bubbling-off
analysis as in~\cite{hofe93}; the new aspect here is that
we have to deal with bubbling at the boundary. For the interior
bubbling-off of spheres we rely on the more sophisticated
compactness results from~\cite{behwz03}. In a final section
we give a brief sketch how the filling with holomorphic discs
can be applied to weak symplectic fillings of $S^2\times S^1$
with its standard contact structure; as in the case of $S^3$ this
allows one to classify such fillings up to
diffeomorphism.

The set-up here is parallel to our previous paper~\cite{geze10};
it therefore seems opportune to list some minor
corrections to that paper in an appendix to the present one.
As in \cite{geze10} we write $\D\subset\C$ and $\Hp\subset\C$ for 
the {\em closed\/} unit disc and upper half-plane, respectively.
In \cite{geze12} we extend the results of the present paper to
higher dimensions.
\section{The ball theorems}
\label{section:ball}
We begin with a description of the specific symplectic cobordisms
that form the setting of our main theorems; see
Figure~\ref{figure:cobordism1}. For the basics of symplectic cobordisms
cf.~\cite[Chapter~5]{geig08}.

Let $(M_{\pm},\xi_{\pm}=\ker\alpha_{\pm})$
be two closed $3$-dimensional contact manifolds, oriented by the
volume forms $\alpha_{\pm}\wedge\rmd\alpha_{\pm}$.
The symplectic cobordisms $(W,\omega)$ we want to consider
are compact, connected symplectic $4$-manifolds, oriented by the
volume form~$\omega^2$, with the following properties:
\begin{itemize}
\item[(C1)] $(W,\omega)$ is minimal, i.e.\ does not contain
symplectically embedded $2$-spheres of self-intersection~$-1$
(so-called exceptional spheres).
\item[(C2)] The boundary of $W$ equals
\[ \partial W=\overline{M}_-\sqcup M_+\sqcup S^3\]
as oriented manifolds, where $\overline{M}_-$ denotes $M_-$ with the
reversed orientation. One or both of $M_{\pm}$ may be empty, and they
need not be connected.
\item[(C3)] The restriction of $\omega$ to (the tangent bundle of)
$M_-$ equals $\rmd\alpha_-$.
\item[(C4)] The restriction of $\omega$ to the $2$-plane field
$\xi_+=\ker\alpha_+$ on $M_+$ is positive.
\item[(C5)] A neighbourhood of $S^3\subset\partial W$ in $(W,\omega)$ looks
like a neighbourhood of $S^3=\partial D^4$ in $D^4$ with the standard
symplectic form $\omega_{\st}=\rmd x_1\wedge\rmd y_1+\rmd x_2\wedge\rmd y_2$.
\end{itemize}

Condition (C4), with the orientation condition~(C2),
says that $(M_+,\xi_+)$ is a weakly convex boundary
component of $(W,\omega)$; cf.~\cite[Chapter~5]{geig08} for the
various notions of convex resp.\ concave boundaries of symplectic
manifolds. The choice of contact form $\alpha_+$ defining
the given $\xi_+$ is irrelevant for our purposes.

Let
\[ \lamst:=\frac{1}{2}(x_1\,\rmd y_1-y_1\,\rmd x_1+
x_2\,\rmd y_2-y_2\,\rmd x_2) \]
be the standard primitive of the symplectic form~$\omega_{\st}$,
and set $\alst=\lamst|_{TS^3}$.
Condition (C5) says that $S^3$ with its standard contact
structure $\xist=\ker\alst$
is a strongly convex boundary of $(W,\omega)$, with a Liouville
vector field $Y$ for $\omega$ (i.e.\ $L_Y\omega =\omega$)
defined near $S^3\subset W$, pointing out of~$W$, and such
that $i_Y\omega$ restricts to the contact form $\alst$ on $S^3$.
This condition on the induced contact form, together with
condition~(C3), serves to normalise the contact form~$\alpha_-$,
which allows us to speak in quantitative terms about the
Reeb dynamics of~$\alpha_-$.

Finally, condition (C3) can be read as saying that $(M_-,\xi_-)$
is a strongly concave boundary of $(W,\omega)$. This is well known
and can best be seen with the help of relative de Rham cohomology.
With $U$ denoting a collar neighbourhood of $M_-$ in~$W$,
the relative de Rham cohomology group $H^2_{\mathrm{dR}}(U,M_-)$ is trivial.
From the definition of
relative de Rham cohomology, cf.~\cite[p.~78]{botu82}, one
finds a $1$-form $\beta$ on $U$ which restricts to $\alpha_-$
on (the tangent bundle of) $M_-$ and such that $\omega=\rmd\beta$ on~$U$.
The vector field $Y$ on $U$ defined by $i_Y\omega =\beta$
is then a Liouville vector field for $\omega$ that induces $\alpha_-$
on $M_-$ and points inward by the orientation condition~(C2).

\begin{figure}[h]
\labellist
\small\hair 2pt
\pinlabel $\overline{M}_-$ [r] at 0 234
\pinlabel $M_+$ [l] at 542 91
\pinlabel $S^3$ [l] at 516 362
\pinlabel $(W,\omega)$ at 262 231
\endlabellist
\centering
\includegraphics[scale=0.4]{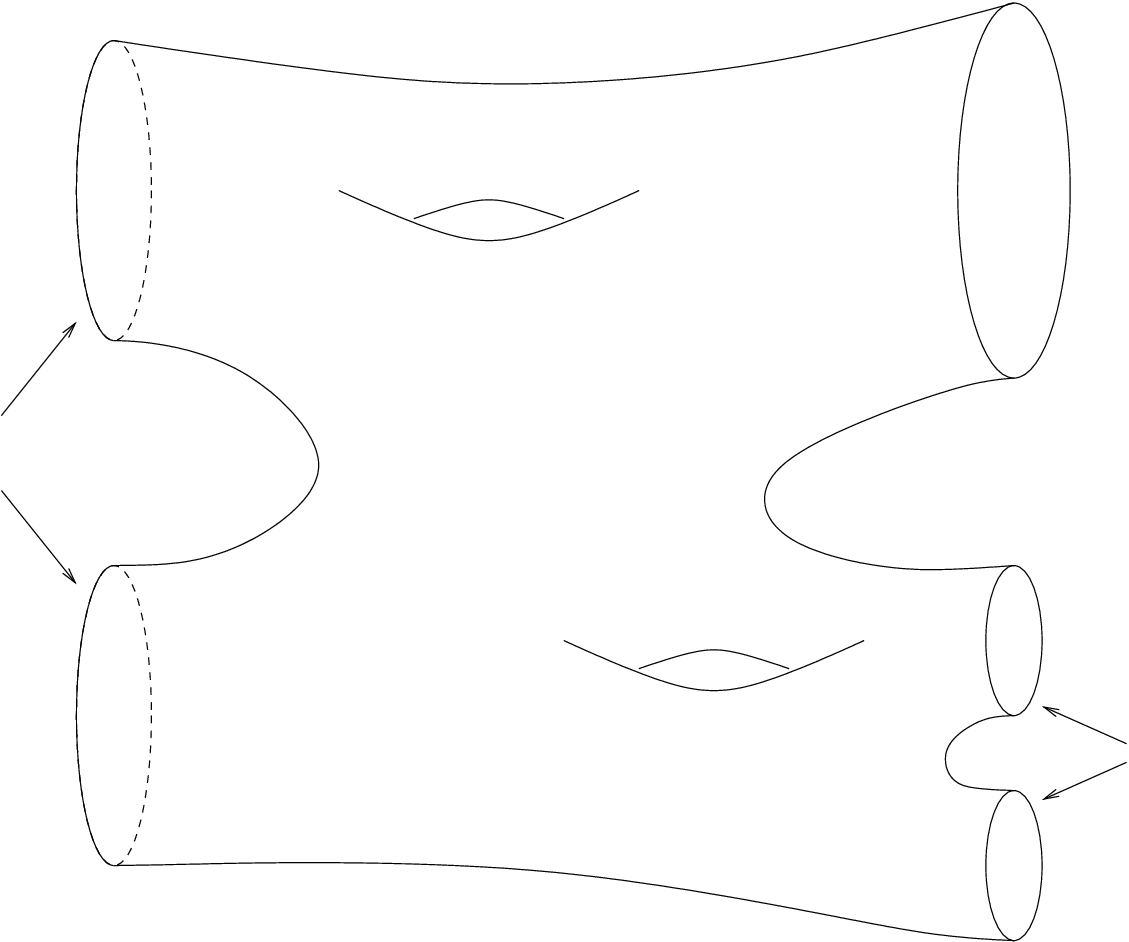}
  \caption{Might this be a $4$-ball?}
  \label{figure:cobordism1}
\end{figure}

Recall that the Reeb vector field $R=R_{\alpha}$ of a contact form $\alpha$
is defined by the equations $i_R\rmd\alpha=0$ and $\alpha (R)=1$.

\begin{nota}
We write $\inf (\alpha )$ for the infimum of all positive periods
of closed orbits of the Reeb vector field~$R_{\alpha}$.
With $\inf_0(\alpha)$ we denote the infimum of all positive periods
of {\em contractible} closed Reeb orbits.
\end{nota}

\begin{rem}
An argument as in \cite[p.~109]{hoze94} shows that both
infima are minima, and in particular positive, unless the
relevant set of Reeb orbits is empty and the infimum equal to~$\infty$.
\end{rem}

We can now state our two main theorems, whose essence is the following:
unless $W$ is a $4$-ball (and in particular $M_{\pm}$ are empty),
there must be a short Reeb orbit on~$(M_-,\alpha_-)$.

\begin{thm}[The ball theorem]
\label{thm:ball}
Let $(W,\omega)$ be a symplectic cobordism satisfying conditions
{\rm (C1)} to {\rm (C5)}. Then either $\inf(\alpha_-)\leq\pi$ or
$W$ is diffeomorphic to a $4$-ball.
\end{thm}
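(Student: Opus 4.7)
The plan is to follow Eliashberg's disc-filling strategy, adapted to the cobordism setting as in~\cite{geze10}, exploiting the quantitative control provided by~(C5). First, choose an $\omega$-tame almost complex structure $J$ on~$W$ that agrees with the standard $J_{\st}$ on the model neighbourhood of~$S^3$ given by~(C5), is cylindrical with respect to~$\alpha_-$ near~$M_-$ (so that symplectization compactness can be applied at the concave end), and tames~$\xi_+$ near~$M_+$. By~(C5), the standard Bishop family of complex-linear discs through the origin of $D^4$ consists of $J$-holomorphic maps
\[ u\co (\D,\partial\D)\lra (W,S^3), \]
each of $\omega$-area~$\pi$. Following~\cite{geze10}, set up the moduli space $\mathcal M$ of all $J$-holomorphic discs with boundary on~$S^3$ lying in the same relative homotopy class as a Bishop disc, suitably normalised. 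For generic~$J$ this is a smooth $2$-manifold, and it contains the Bishop component $\mathcal M_0\cong S^2$ imported from the model end.

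The dichotomy splits on whether $\mathcal M_0$ is compact. If it is, then $\mathcal M_0=S^2$ and the evaluation map $\ev\co\mathcal M_0\times\D\ra W$ is proper; the index and degree arguments of~\cite{geze10} show that $\ev$ is an orientation-preserving map of degree~$1$ that is a local diffeomorphism away from two `pole' points, endowing~$W$ with a singular foliation by holomorphic discs with boundary on~$S^3$. Surjectivity then forces $M_\pm=\emptyset$, and $\ev$ collapses to a diffeomorphism $W\cong D^4$.

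If $\mathcal M_0$ is non-compact, invoke the SFT compactness theorem of~\cite{behwz03} in the interior together with classical Hofer-style bubbling analysis at both boundary components. The non-compactness must then be caused by one of: (i) interior bubbling of a $J$-holomorphic sphere in~$W$; (ii) disc bubbling at the strongly convex end~$S^3$; (iii) accumulation towards the weakly convex boundary~$M_+$; or (iv) breaking at the strongly concave boundary~$M_-$. Option~(i) is ruled out by the minimality~(C1) via positivity of intersections and adjunction, since any would-be sphere bubble has $\omega$-area at most~$\pi$ and would give an exceptional sphere. Option~(ii) is excluded by area and Maslov-index bookkeeping on the resulting holomorphic building, which cannot match the data of a Bishop disc. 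Option~(iii) is excluded by a maximum-principle argument using the weak convexity~(C4). Hence option~(iv) occurs, producing a finite-energy $J$-holomorphic plane in the symplectization $\R\times M_-$ asymptotic to a closed Reeb orbit~$\gamma$ of~$\alpha_-$; by the standard asymptotic analysis, its period $\int_\gamma\alpha_-$ equals the $\rmd\alpha_-$-area of the plane, which is bounded above by the total $\omega$-area~$\pi$ of the original discs, giving $\inf(\alpha_-)\leq\pi$.

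The main obstacle is the full compactness analysis in the non-compact case: above all, excluding disc bubbling at~$S^3$, which (unlike interior sphere bubbling) is a genuinely new boundary phenomenon that demands careful index and area accounting; handling possible accumulation at the merely weakly convex boundary~$M_+$ rather than at a strongly convex end; and carrying out the interior sphere-bubble exclusion, for which the more sophisticated framework of~\cite{behwz03} is required.
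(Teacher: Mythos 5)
Your proposal follows the paper's Bishop-disc filling approach (Sections~\ref{section:proof}--\ref{section:compact}) and correctly identifies the compactness-failure modes to exclude---interior sphere bubbling by minimality and adjunction, boundary disc bubbling by Maslov/area bookkeeping, accumulation at $M_+$ by the maximum principle from condition~(J3)---and the one that survives: breaking at $M_-$, giving a Reeb orbit of period at most~$\pi$. A few details are imprecise but not structural: the truncated Bishop moduli space is a closed $2$-disc rather than $S^2$, and the compact case is concluded via the explicit embedding $F\circ F_{\st}^{-1}\co D^4\ra\wtW$ of \cite{geze10} rather than a degree argument; the model discs have $\omega$-area at most~$\pi$, not exactly~$\pi$; and the bubbling analysis must be carried out in the symplectic completion $\wtW$ of $W$ along $M_-$, where the Hofer energy is defined via the family~$\TT$.
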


\begin{rem}
In the case that $W$ is a $4$-ball, a theorem of
Gromov~\cite[p.~311]{grom85} implies that $(W,\omega)$ is
actually {\em symplectomorphic\/} to $(D^4,\omega_{\st})$.
\end{rem}

The following version of the ball theorem sharpens the dichotomy
under the additional requirement that the symplectic form have
a suitable primitive.

\begin{thm}[The exact ball theorem]
\label{thm:ball-exact}
Let $(W,\omega=\rmd\lambda)$ be a symplectic cobordism satisfying conditions
{\rm (C1)} to {\rm (C5)}, and with $\lambda|_{TM_-}=\alpha_-$.
Then either $\inf_0(\alpha_-)\leq \pi$ or $W$ is
diffeomorphic to a $4$-ball.
\end{thm}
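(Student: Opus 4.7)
The plan is to follow the proof of Theorem~\ref{thm:ball} and argue that the exactness hypothesis, combined with the normalisation $\lambda|_{TM_-}=\alpha_-$, upgrades the short Reeb orbit produced by a non-compactness argument to a contractible one. I reuse the moduli space $\mathcal M$ of $J$-holomorphic discs $u\co(\D,\partial\D)\to(W,S^3)$, initiated from the Bishop family at $S^3$ supplied by~(C5) and equipped with three boundary marked points as in~\cite{geze10}. If $\mathcal M$ is compact, the evaluation argument of Theorem~\ref{thm:ball} delivers the diffeomorphism $W\cong D^4$ and there is nothing to prove.

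Otherwise, a sequence of discs degenerates to a holomorphic building, and exactness kills two kinds of components outright: interior $J$-holomorphic spheres in $W$, and closed $J$-holomorphic curves in the symplectization $(\R\times M_-,\rmd(\rme^t\alpha_-))$, since in both cases Stokes forces their symplectic area to vanish. Exactness also excludes any plane in the symplectization whose sole puncture is at~$-\infty$, because its Hofer energy would be negative. The surviving building configurations therefore consist of a top piece in $W$ (a punctured disc with boundary on $S^3$) together with a tower of curves in the symplectization, every component of which carries at least one positive puncture.

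The contractibility claim reduces to the following statement: at least one component of the tower is a holomorphic plane $v\co\C\to\R\times M_-$ whose positive puncture matches a negative puncture coming from above, asymptotic to some Reeb orbit $\gamma\subset M_-$. Given such a plane, projecting $v$ via $\R\times M_-\to M_-$ produces a continuous capping disc for $\gamma$, so $\gamma$ is contractible in~$M_-$. The period estimate $\int_\gamma\alpha_-\le\pi$ then follows from Stokes applied to the truncated top piece: using the hypothesis $\lambda|_{TM_-}=\alpha_-$, the period of $\gamma$ equals the $\omega$-energy flowing down into the symplectization, which in turn is bounded above by the total Bishop-disc area~$\pi$.

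The hardest step, as I see it, is the combinatorial argument that a plane must appear somewhere in the tower; one must rule out configurations built only from cylinders, pants and other multi-end curves. The natural route is a genus and puncture count: the top piece is a disc and its boundary, lying in the simply connected~$S^3$, can topologically be capped off to produce a sphere, after which the glued-up building must itself be a closed surface, and the energy budget of at most $\pi$ severely restricts how many non-trivial components can contribute. It is precisely here that the interplay between exactness, Stokes, and the topology of $S^3$ does the essential work, and this is what separates the exact ball theorem from Theorem~\ref{thm:ball}, where non-contractible breakings are allowed.
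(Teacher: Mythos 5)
Your proposal correctly captures the role of exactness (Stokes kills spheres, and a global primitive kills planes with a negative puncture) and the mechanism for contractibility (a finite energy plane in the symplectization $\R\times M_-$ projects to a capping disc for the asymptotic Reeb orbit). However, the route you take through SFT-style holomorphic buildings creates a genuine gap that the paper's argument simply does not face.

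You flag the ``hardest step'' as the combinatorial claim that some component of the tower must be a plane, and your suggested genus-and-puncture count is not a proof. The paper does not reason about buildings in order to extract the short orbit. Instead, following Hofer, it assumes there is no uniform gradient bound for the Bishop discs and rescales directly at a gradient blow-up point. The rescaled limit is a single nonconstant finite energy plane, and the dichotomy of \cite[Proposition~2.11]{hwz03} then applies: either its image is bounded (a sphere bubbles off, alternative~(A1)), or it escapes into the cylindrical end with a negative puncture (alternative~(A2)). If the original sequence of rescaled centres itself drifts to $-\infty$ in the end, Hofer's shifting trick produces instead a finite energy plane in the symplectisation $\R\times M_-$, whose only possible behaviour by the maximum principle is a positive puncture (alternative~(A3)). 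Exactness rules out~(A1) by Stokes; it rules out~(A2) because $\lambda$ on $W$ glues with $\tau_\mu\alpha_-$ to a global primitive, so the asymptotic period would have to vanish; and~(A3) gives a contractible orbit of period at most~$\pi$, contradicting $\inf_0(\alpha_-)>\pi$. No analysis of multi-ended components or building combinatorics ever enters, because the rescaling already hands you a plane.

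Two smaller imprecisions. First, you say exactness excludes a plane ``in the symplectization whose sole puncture is at $-\infty$''; the plane excluded by exactness in case~(A2) lives in the completion $\wtW$ (where it can meet the compact piece $W$), not the symplectisation, and in the symplectisation the maximum principle already forbids a negatively asymptotic plane regardless of exactness. Second, your period estimate ``follows from Stokes applied to the truncated top piece'' is vague; the paper estimates the Hofer energy of the finite energy plane itself (it inherits the Bishop-disc bound $\pi$ from Proposition~\ref{prop:energy-bound}) and then shows $E(w)\ge T$ by a direct Stokes computation on $\{|z|\le r_\mu\}$.
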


By the theorem of Stokes there are no symplectically embedded
$2$-spheres in an exact symplectic manifold, so condition
(C1) is automatic in the exact case.

The dichotomy in the ball theorems can be exploited either way:
from the non-existence of short periodic Reeb orbits (e.g.\ when
$M_-$ is empty) one can deduce topological information
about symplectic cobordisms; conversely, topological information
about a symplectic cobordism can be used to detect a short closed
Reeb orbit. A number of such results will be derived in
Section~\ref{section:corollaries}. In several of these corollaries
the following concept and the construction that we shall describe
presently play an important role.

\begin{defn}
Let $(M_{\pm},\xi_{\pm})$ be closed $3$-dimensional contact manifolds.
A compact symplectic $4$-manifold $(W,\omega)$ with oriented boundary
$\partial W=\overline{M}_-\sqcup M_+$ is called a {\bf Liouville
cobordism} from $M_-$ to $M_+$ if the symplectic form $\omega$ is exact and
its primitive can be chosen as a contact form for~$\xi_{\pm}$, i.e.\
$\omega=\rmd\lambda$ with $\ker(\lambda|_{TM_{\pm}})=\xi_{\pm}$.
\end{defn}

The following example (also observed by Wendl~\cite{wend}) shows that
a symplectic cobordism with an exact symplectic form
is not, in general, a Liouville cobordism.

\begin{ex}
By the Weinstein tubular neighbourhood theorem~\cite{wein71},
the complement of a tubular neighbourhood of a Lagrangian $2$-torus
in $(D^4,\rmd\lamst)$ is a strong symplectic cobordism from
$(T^3,\ker(\cos\theta\,\rmd x-\sin\theta\,\rmd y))$ to $(S^3,\xist)$.
By the exact ball theorem, however, there can be no
Liouville cobordism, since the periodic Reeb orbits of the contact form
$\cos\theta\,\rmd x-\sin\theta\,\rmd y$ are all non-contractible.
\end{ex}

\begin{rem}
\label{rem:exact}
Given a symplectic cobordism $(W,\omega)$ with a weakly convex
boundary component $(M_+,\xi_+)$ and a symplectic form that
is exact near~$M_+$, a construction of
Eliashberg~\cite{elia91}, cf.~\cite{geig06}, allows one to modify
the primitive $\lambda$ of $\omega$ (defined in a neighbourhood of~$M_+$)
in such a way that $\ker(\lambda|_{TM_+})=\xi_+$.
As the preceding example shows, such a modification is not
possible, in general, at a concave end.
Moreover, one can then arrange $\lambda$ to equal
a given contact form $\alpha_+$ on $M_+$
up to a constant scale factor by first taking the symplectic completion
of $(W,\omega)$ along~$M_+$, i.e.\ adding a cylindrical end of the
form $([0,\infty)\times M_+,\rmd (\rme^s\lambda|_{TM_+}))$,
and then replacing $M_+\equiv\{ 0\}\times M_+$
by a suitable graph in this cylindrical end.
\end{rem}

Strong symplectic and Liouville cobordisms can be glued
together (along a convex and a contactomorphic concave end) by using
the Liouville vector field to define collar neighbourhoods of the
boundaries. This may require the rescaling of one of the
symplectic forms by a constant and the insertion of
a cylindrical tube, see~\cite[Proposition~5.2.5]{geig08}.
\section{Corollaries of the ball theorems}
\label{section:corollaries}
\subsection{Topology of symplectic cobordisms}
Our first corollary was originally proved by
McDuff~\cite[Theorem~1.2]{mcdu91}.

\begin{cor}[McDuff]
\label{cor:McDuff}
If $(W,\omega)$ is a compact symplectic $4$-manifold
with weakly convex boundary components only, and
one of the boundary components is $(S^3,\xist)$, then
the boundary is connected.
\end{cor}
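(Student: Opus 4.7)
The plan is to apply Theorem~\ref{thm:ball} with the concave boundary $M_-$ empty and $M_+$ equal to the union of the weakly convex boundary components other than the distinguished~$S^3$. Since there are no closed Reeb orbits on an empty manifold, the infimum $\inf(\alpha_-)$ is $+\infty$, so the theorem will force the underlying manifold to be diffeomorphic to~$D^4$; in particular its boundary is~$S^3$, forcing $M_+=\emptyset$, which is the desired conclusion. All that remains is to arrange conditions~(C1)--(C5) without altering either the diffeomorphism type of $\partial W$ or the weakly convex boundary components.

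For~(C5), I would first observe that a collar neighbourhood $N$ of $S^3$ has $H^2_{\mathrm{dR}}(N) \cong H^2(S^3) = 0$, so $\omega|_N$ is exact. Eliashberg's construction described in Remark~\ref{rem:exact} then lets us adjust a primitive $\lambda$ of $\omega|_N$ so that $\ker(\lambda|_{TS^3}) = \xist$; after attaching a cylindrical end along $S^3$ and choosing a suitable graph in it, followed by a positive rescaling of~$\omega$, we may assume $\lambda|_{TS^3} = \alst$. The corresponding Liouville vector field then identifies a neighbourhood of $S^3$ with a neighbourhood of $\partial D^4$ in $(D^4, \omega_{\st})$, giving~(C5). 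This construction is local to the collar~$N$, so the weakly convex conditions on $M_+$ are undisturbed. For~(C1), I would symplectically blow down exceptional spheres finitely many times; each blow-down strictly decreases $b_2$, so the process terminates, and as blow-down is an interior modification it does not affect $\partial W$ or the collar conditions (C3)--(C5). Applying Theorem~\ref{thm:ball} to the resulting minimal manifold $W'$ then yields $W' \cong D^4$, whence $\partial W = \partial W' = S^3$ as required.

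The main obstacle is the first step: one must upgrade the \emph{weakly} convex $S^3$-boundary into a \emph{strongly} convex one with the standard contact form. This relies crucially on the vanishing $H^2(S^3) = 0$ together with Eliashberg's primitive-adjustment trick, neither of which is available for a generic weakly convex component, which is precisely why the corollary singles out~$S^3$.
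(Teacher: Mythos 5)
Your proposal is correct and follows essentially the same route as the paper: blow down to a minimal model, use $H^2_{\mathrm{dR}}(S^3)=0$ together with the Eliashberg modification from Remark~\ref{rem:exact} to arrange (C5), and then invoke Theorem~\ref{thm:ball} with $M_-=\emptyset$ so that $\inf(\alpha_-)=\infty$ forces $W\cong D^4$ and hence $M_+=\emptyset$. The only cosmetic difference is the order in which you perform the blow-down and the collar modification, which is immaterial since both operations are supported in disjoint regions of~$W$.
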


\begin{proof}
After blowing down exceptional spheres in $(W,\omega)$,
cf.~\cite[Chapter~7]{mcsa98}, we may assume that
$(W,\omega)$ is minimal. Since $H^2_{\mathrm{dR}}(S^3)=0$,
the symplectic form $\omega$
is exact in a neighbourhood of the boundary component~$S^3$.
Hence, by Remark~\ref{rem:exact},
$\omega$ can be modified in that neighbourhood so
that condition (C5) from Section~\ref{section:ball} is satisfied (up to
a constant scale factor). We are then in the situation of
Theorem~\ref{thm:ball} with $M_-=\emptyset$. This implies
$\inf(\alpha_-)=\infty$, and so the theorem tells us that
$W\cong D^4$, which means $M_+=\emptyset$.
\end{proof}

This proof also yields the following
variant of a result due to Gromov~\cite[p.~311]{grom85},
Eliashberg~\cite[Theorem~5.1]{elia90} and McDuff~\cite[Theorem~1.7]{mcdu90}.

\begin{cor}[Gromov, Eliashberg, McDuff]
Any minimal weak symplectic filling of $(S^3,\xist)$ is
diffeomorphic to the $4$-ball.\qed
\end{cor}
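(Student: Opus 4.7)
The plan is to reduce directly to Theorem~\ref{thm:ball} by the same mechanism used in the proof of Corollary~\ref{cor:McDuff}, the only difference being that no auxiliary convex boundary components $M_+$ need to be handled separately. Since $\partial W = S^3$ by hypothesis, conditions (C2), (C3), (C4) are either satisfied directly or vacuous, and minimality of $(W,\omega)$ furnishes~(C1). The only nontrivial point is to arrange condition~(C5), i.e.\ to put $\omega$ in standard form on a collar of~$S^3$.

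For this I would argue exactly as in Corollary~\ref{cor:McDuff}: because $H^2_{\mathrm{dR}}(S^3)=0$, the symplectic form $\omega$ is exact on some neighbourhood $U$ of~$S^3$, so $\omega=\rmd\lambda$ on~$U$. By Remark~\ref{rem:exact} one can alter $\lambda$ near the weakly convex boundary so that $\ker(\lambda|_{TS^3})=\xist$; then, by attaching a cylindrical end to $S^3$ and replacing $S^3$ by a suitable graph therein, one can further arrange $\lambda|_{TS^3}=c\,\alst$ for some constant $c>0$. Rescaling $\omega$ (which does not affect the diffeomorphism type of~$W$) reduces to $c=1$, and the Liouville vector field $Y$ dual to $\lambda$ via $i_Y\omega=\lambda$ points out of $W$ and verifies~(C5).

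With all five conditions in place and $M_-=M_+=\emptyset$, Theorem~\ref{thm:ball} applies. The set of periodic Reeb orbits of $\alpha_-$ is empty, so $\inf(\alpha_-)=\infty>\pi$, and the dichotomy forces the second alternative: $W$ is diffeomorphic to~$D^4$.

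The only place where any work is required is checking that the Eliashberg modification underlying Remark~\ref{rem:exact} is applicable in the present \emph{weak} filling context. This is the main (and essentially only) point to verify, but it is automatic: the construction only needs the existence of a primitive of $\omega$ near the weakly convex boundary and the weak convexity hypothesis $\omega|_{\xi_+}>0$, both of which we have. No additional bubbling or moduli-space analysis is needed beyond what is already built into Theorem~\ref{thm:ball}.
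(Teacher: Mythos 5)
Your argument is correct and matches the paper's proof, which simply reuses the argument of Corollary~\ref{cor:McDuff}: exactness of $\omega$ near $S^3$ (since $H^2_{\mathrm{dR}}(S^3)=0$), the modification of the primitive from Remark~\ref{rem:exact} to arrange~(C5) up to scale, and then Theorem~\ref{thm:ball} with $M_-=M_+=\emptyset$. The only inessential variation is that here minimality is hypothesised rather than achieved by blowing down, as you correctly note.
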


Before we turn to the next corollary, we recall a statement
about symplectic cobordisms that will be used in the proof of that
and other corollaries. This statement is originally due to
Etnyre--Honda~\cite{etho02}; here we give a proof based on
the surgery presentation theorem~\cite{dige04} for contact $3$-manifolds.

\begin{thm}[Etnyre--Honda]
\label{thm:etho}
Let $(M_{\ot},\xi_{\ot})$ be an overtwisted contact $3$-mani\-fold,
and $(M,\xi)$ any contact $3$-manifold, where both manifolds are assumed
to be closed and connected. Then there is a Liouville cobordism from
$(M_{\ot},\xi_{\ot})$ to $(M,\xi)$.
\end{thm}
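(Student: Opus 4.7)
The strategy, prompted by the reference to Ding--Geiges~\cite{dige04}, is to realise the desired Liouville cobordism as a concatenation of Weinstein $2$-handle attachments. The key point is that a contact $(-1)$-surgery along a Legendrian knot is precisely the contact-geometric manifestation of a Weinstein $2$-handle attachment, so each such surgery yields an elementary Weinstein (and \textit{a fortiori} Liouville) cobordism. The whole proof therefore reduces to realising the passage from $(M_{\ot},\xi_{\ot})$ to $(M,\xi)$ by a sequence of contact $(-1)$-surgeries alone.

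Applying the Ding--Geiges theorem to $(M,\xi)$ gives a contact $(\pm 1)$-surgery presentation of $(M,\xi)$ along a Legendrian link in $(S^3,\xist)$. Applying the same theorem to $(M_{\ot},\xi_{\ot})$ and then inverting (which exchanges $+1$ and $-1$) yields a presentation of $(S^3,\xist)$ as contact $(\pm 1)$-surgery along a Legendrian link in $(M_{\ot},\xi_{\ot})$. Concatenating these two presentations produces a contact $(\pm 1)$-surgery presentation of $(M,\xi)$ directly from $(M_{\ot},\xi_{\ot})$ along some Legendrian link $L=L_-\sqcup L_+$, split according to surgery coefficient. Any overtwisted disc in $(M_{\ot},\xi_{\ot})$ can be isotoped off $L$, so every intermediate contact manifold arising in the course of the surgery sequence inherits an overtwisted disc and is in particular overtwisted.

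The heart of the argument is the following substitution, which is where overtwistedness is exploited essentially: in an overtwisted contact $3$-manifold, every contact $(+1)$-surgery on a Legendrian knot $K$ can be re-expressed as a finite sequence of contact $(-1)$-surgeries producing the same resulting contact manifold. The underlying idea is that contact $(+1)$-surgery on $K$ and contact $(-1)$-surgery on $K$ differ by a Lutz-type twist along a transverse knot close to $K$; in the presence of a disjoint overtwisted disc, this twist is, up to contactomorphism, absorbable by a small collection of contact $(-1)$-surgeries localised inside a standard neighbourhood of the overtwisted disc. Iterating this substitution over the components of $L_+$ converts the mixed $(\pm 1)$-presentation into a pure $(-1)$-surgery presentation of $(M,\xi)$ starting at $(M_{\ot},\xi_{\ot})$.

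The main obstacle will be this trading of $(+1)$-surgeries for $(-1)$-surgeries in the overtwisted regime, since it is where one must appeal to explicit contact surgery calculus and the flexibility afforded by the overtwisted disc. Once it is in place, stacking the corresponding Weinstein $2$-handle cobordisms yields a Weinstein cobordism from $(M_{\ot},\xi_{\ot})$ to $(M,\xi)$; this is in particular Liouville, and a final cosmetic adjustment of the primitive near the convex end along the lines of Remark~\ref{rem:exact} ensures that $\lambda|_{TM_+}$ is a contact form for the given $\xi$ on $M=M_+$.
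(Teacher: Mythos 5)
Your overall strategy—reduce the problem to producing a pure contact $(-1)$-surgery presentation of $(M,\xi)$ from $(M_{\ot},\xi_{\ot})$, since each such surgery is a Weinstein $2$-handle attachment—is exactly the right framing and matches the paper's. But the proof as written has a genuine gap at its declared heart, and the ingredient you would need to fill it is essentially the statement being proved.

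The substitution you rely on, namely that in an overtwisted contact $3$-manifold every contact $(+1)$-surgery can be re-expressed as a sequence of contact $(-1)$-surgeries producing the same contact manifold, is not a known black box that you can cite; it is a special case of the Etnyre--Honda theorem itself (a Liouville cobordism from the overtwisted $(N,\zeta)$ to the result $(N',\zeta')$ of the $(+1)$-surgery). Moreover, the heuristic you offer for it is incorrect: contact $(+1)$-surgery and contact $(-1)$-surgery on the \emph{same} Legendrian knot $K$ produce topologically different manifolds (the framings differ by $2$), so they cannot differ by any Lutz-type twist, which is topologically trivial. The correct way to exchange coefficients in the overtwisted regime is not a local twist near $K$ but Eliashberg's Legendrian realisation theorem from~\cite{elia90a}: given an overtwisted disc disjoint from a framed link, one may choose a Legendrian representative of each component with $\mathrm{tb}$ equal to the desired framing plus one, so that the prescribed \emph{integer} surgeries all become contact $(-1)$-surgeries. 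This is the step that your proposal is missing, and it is where the flexibility of overtwistedness is actually used.

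The paper's proof is organised differently and more economically. It does not detour through $(S^3,\xist)$ at all: it first applies Lickorish--Wallace to get a topological surgery sequence from $M_{\ot}$ to $M$, then invokes Eliashberg's realisation theorem to render that entire sequence as contact $(-1)$-surgeries, yielding a Liouville cobordism from $(M_{\ot},\xi_{\ot})$ to $(M,\xi')$ for some $\xi'$. Only then does it adjust the contact structure: inserting homotopically trivial Lutz twists on the overtwisted end (invisible there, by Eliashberg's classification) makes $\xi'$ overtwisted; any overtwisted $\xi'$ is obtained from $\xi$ by topologically trivial Lutz twists, hence by contact $(+1)$-surgeries (by~\cite{dgs04}); and the cancellation lemma \cite[Proposition~6.4.5]{geig08} converts those into contact $(-1)$-surgeries from $(M,\xi')$ to $(M,\xi)$. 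Notice that the $(+1)\leftrightarrow(-1)$ trade happens here between a Legendrian knot and its push-off, not on the same knot, which is why the topology is preserved. If you rewrite your argument to use Eliashberg's realisation result directly for the Lickorish--Wallace link, rather than a one-surgery-at-a-time substitution you cannot justify, you will recover the paper's proof. You should also be careful with the claim that the overtwisted disc can always be kept disjoint from all remaining surgery curves throughout the process: after a replacement step the old disc may no longer be available, and you would need to argue that a new one can be found away from the unresolved link; the paper avoids this bookkeeping by doing all the $(-1)$-surgeries at once.
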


\begin{proof}
It suffices to show that $(M,\xi)$ can be obtained from
$(M_{\ot},\xi_{\ot})$ by a sequence of Legendrian surgeries
(or contact $(-1)$-surgeries in the sense of~\cite{dige04}),
since such surgeries translate into a Liouville cobordism from
the original to the surgered manifold.

By the classical surgery presentation theorem for $3$-manifolds
due to Lickorish and Wallace, there is a sequence of integer
surgeries that gets us from $M_{\ot}$ to~$M$.
By a result of Eliashberg~\cite{elia90a}, cf.\ \cite[Chapter~6.3]{geig08},
in an overtwisted contact manifold we can choose a Legendrian
realisation of the surgery link in such a way that the
desired integer surgeries correspond to contact $(-1)$-surgeries
along the components of the Legendrian link.
This yields a Liouville cobordism from $(M_{\ot},\xi_{\ot})$
to $(M,\xi')$, where $\xi'$ is some contact structure on~$M$.
By adding homotopically trivial Lutz twists on
$(M_{\ot},\xi_{\ot})$ away from the surgery link (this does not
change $\xi_{\ot}$ by Eliashberg's classification~\cite{elia89} of overtwisted
contact structures), we can ensure that $\xi'$ is likewise
overtwisted. This means that $\xi'$ can be obtained from $\xi$
by performing topologically trivial Lutz twists,
which can be realised as contact $(+1)$-surgeries~\cite{dige04}.
Conversely, by the cancellation lemma from~\cite[Section~3]{dige04},
cf.\ \cite[Proposition~6.4.5]{geig08},
$(M,\xi)$ is obtained from $(M,\xi')$ by contact $(-1)$-surgeries.
\end{proof}

The part of the next corollary concerning overtwisted
contact structures is contained in the work of Hofer~\cite{hofe93}.

\begin{cor}[Hofer]
\label{cor:Hofer1}
If $\xi_-$ is a contact structure on a closed $3$-manifold $M_-$
that can be defined by a contact form $\alpha_-$ without contractible
periodic Reeb orbits, then there is no Liouville
cobordism from $(M_-,\xi_-)$ to a not necessarily connected
contact $3$-manifold with at least one overtwisted component.
In particular, $\xi_-=\ker\alpha_-$ is tight, i.e.\ not overtwisted.
\end{cor}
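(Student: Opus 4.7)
The strategy is to argue by contradiction using the exact ball theorem (Theorem~\ref{thm:ball-exact}), after massaging the hypothetical cobordism so that its concave boundary carries the specific contact form~$\alpha_-$ and its convex side includes a standard contact sphere.

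Suppose, toward a contradiction, that $(W_0,\rmd\lambda_0)$ is a Liouville cobordism from $(M_-,\xi_-)$ to some $(M_+,\xi_+)$, and that $(M_+,\xi_+)$ has an overtwisted connected component $(N,\xi_+|_N)$. The restriction $\lambda_0|_{TM_-}$ is some contact form for~$\xi_-$, hence of the shape $f\alpha_-$ with $f>0$. After rescaling $\lambda_0$ by a constant so that $f>1$ pointwise, I prepend a symplectization collar $[0,1]\times M_-$ with primitive $h(s,x)\,\alpha_-$ interpolating from $1$ to~$f$ with $\partial_s h>0$; this replaces the concave boundary contact form by the prescribed~$\alpha_-$. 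By Theorem~\ref{thm:etho}, there is a Liouville cobordism $(V,\rmd\mu)$ from $(N,\xi_+|_N)$ to $(S^3,\xist)$. Gluing $V$ on top of the $N$-part of $M_+$, with the rescalings and cylindrical tubes recalled at the end of Section~\ref{section:ball}, produces an exact symplectic cobordism $(W,\omega=\rmd\lambda)$ with $\lambda|_{TM_-}=\alpha_-$, concave boundary~$M_-$, and convex boundary $(M_+\setminus N)\sqcup(S^3,\xist)$. A final adjustment of $\lambda$ near the new $S^3$ end, via the construction of Remark~\ref{rem:exact}, brings the cobordism into the normal form required by condition~(C5), without touching the behaviour near~$M_-$. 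Conditions (C2)--(C4) hold by construction, and (C1) is automatic since $\omega$ is exact.

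Theorem~\ref{thm:ball-exact} now yields the dichotomy $\inf_0(\alpha_-)\leq\pi$ or $W\cong D^4$. The former is impossible because the hypothesis on~$\alpha_-$ gives $\inf_0(\alpha_-)=\infty$. The latter would force $\partial W$ to be a single $S^3$, hence $M_-=\emptyset$; but $M_-$ carries the contact structure~$\xi_-$, so it is non-empty. This contradiction proves the first assertion. The ``in particular'' statement then follows at once: if $\xi_-$ were overtwisted, Theorem~\ref{thm:etho} applied to $(M_-,\xi_-)$ itself would supply a Liouville cobordism from $(M_-,\xi_-)$ to any chosen overtwisted closed connected contact $3$-manifold, contradicting what has just been proved.

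The main technical point to execute carefully is the normalisation of the concave boundary: one has to combine a constant rescaling of $\lambda_0$ with the insertion of a symplectization collar to replace $\lambda_0|_{TM_-}$ by the specific form~$\alpha_-$, and then ensure that the further rescalings introduced by the Etnyre--Honda capping and by the adjustment near $S^3$ leave this normalisation untouched.
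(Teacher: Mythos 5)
Your argument is correct and follows essentially the same route as the paper's own proof: apply Theorem~\ref{thm:etho} to produce a Liouville cobordism from the overtwisted component to $(S^3,\xist)$, glue it on top, normalise the boundary contact forms inside the symplectic completion as in Remark~\ref{rem:exact}, and invoke the exact ball theorem. Two small remarks: the global constant rescaling needed to achieve $\alst$ exactly on $S^3$ does scale $\lambda|_{TM_-}$ by the same constant, but this is harmless since $\inf_0(c\alpha_-)=c\inf_0(\alpha_-)=\infty$; and for the ``in particular'' statement the trivial Liouville cobordism $([0,1]\times M_-,\rmd(\rme^s\alpha_-))$ already does the job, so invoking Theorem~\ref{thm:etho} a second time, while valid, is a heavier tool than necessary.
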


\begin{proof}
Suppose $(M_-,\xi_-=\ker\alpha_-)$ admits a Liouville cobordism to a 
contact manifold having an overtwisted component $(M_{\ot},\xi_{\ot})$.
According to Theorem~\ref{thm:etho}, there is
a Liouville cobordism from $(M_{\ot},\xi_{\ot})$ to $(S^3,\xist)$.
By gluing this Liouville cobordism to the given one
(and modifying the boundaries inside the symplectic completion as
in Remark~\ref{rem:exact}), we obtain a cobordism as in the
exact ball theorem, up to constant scale of the contact forms on
the boundary. That theorem then guarantees the existence of a
contractible periodic Reeb orbit for~$\alpha_-$.
\end{proof}

\begin{rem}
Observe that the essence of Corollary~\ref{cor:Hofer1} is that
any contact form defining an overtwisted contact structure
on a closed $3$-manifold has a contractible periodic Reeb orbit.
\end{rem}
\subsection{Tightness and fillability}
The following corollary belongs to Gromov~\cite[$2.4.\mathrm{D}_2'$]{grom85}
and Eliashberg~\cite[Theorem~3.2.1]{elia88}.

\begin{cor}[Gromov, Eliashberg]
Let $\xi$ be a contact structure on a closed $3$-manifold~$M$.
If $(M,\xi)$ is weakly symplectically fillable, then $\xi$
is tight.
\end{cor}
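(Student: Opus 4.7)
The plan is to argue by contradiction: assume $\xi$ is overtwisted and use Theorem~\ref{thm:etho} to cap off the weak filling at $M$, reducing to a weak filling of $(S^3,\xist)$ to which Corollary~\ref{cor:McDuff} applies.

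Concretely, let $(W,\omega)$ be the weak filling. By Theorem~\ref{thm:etho} there is a Liouville cobordism $(V,\rmd\lambda)$ from $(M,\xi)$ to $(S^3,\xist)$, and after the normalisation of Remark~\ref{rem:exact} we may assume $\lambda|_{TS^3}=\alst$. I would glue $V$ on top of $W$ along $M$: since both $\omega|_{TM}$ and $\rmd\lambda|_{TM}$ are positive on $\xi$, a collar interpolation of the two symplectic forms produces a symplectic form $\widetilde\omega$ on $\widetilde W:=W\cup_M V$, and $(\widetilde W,\widetilde\omega)$ is a weak symplectic filling of $(S^3,\xist)$ satisfying (C5) near~$S^3$ via the Liouville vector field of $V$. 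Since $V$ is exact, no symplectically embedded $2$-sphere can lie in $V$ by Stokes; hence every exceptional sphere in $\widetilde W$ already lies in $W$, and after blowing these down we may assume $\widetilde W$ is minimal. Corollary~\ref{cor:McDuff} then forces $\widetilde W\cong D^4$, which by Remark~\ref{rem:exact} (invoking Gromov's theorem) can be upgraded to a symplectomorphism $(\widetilde W,\widetilde\omega)\cong(D^4,\omega_{\st})$.

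The main obstacle is converting this into a genuine contradiction. My proposal is to apply the exact ball theorem (Theorem~\ref{thm:ball-exact}) to the Liouville cobordism $V$ itself, now sitting inside the ball $\widetilde W$: it satisfies conditions (C1)--(C5) with $M_-=M$, $M_+=\emptyset$, and $\lambda|_{TM}$ defining $\xi$. Since $\partial V=\overline M\sqcup S^3$ is disconnected, $V$ cannot be diffeomorphic to $D^4$, so the dichotomy forces $\inf_0(\lambda|_{TM})\leq\pi$, producing a contractible periodic Reeb orbit on $(M,\xi)$ for the induced contact form.

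To close the argument, one combines this contractible Reeb orbit with the ambient weak filling geometry $W\subset\widetilde W\cong(D^4,\omega_{\st})$: following the spirit of Gromov's original disc-filling argument for overtwisted contact manifolds, the contractible orbit admits a spanning disc whose interaction with an overtwisted disc in $(M,\xi)$ and with the strongly convex boundary $(S^3,\xist)$ of $\widetilde W$ produces a bubble tree incompatible with the geometry of the standard $4$-ball. This final compatibility step, carried out inside the moduli-theoretic scaffolding of the ball theorems, is the technical heart of the argument.
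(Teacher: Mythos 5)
There is a genuine gap, and you identify it yourself: after gluing the Liouville cobordism $V$ onto the weak filling $W$, the resulting $\widetilde W$ has boundary equal to $S^3$ alone, which is \emph{connected}, so Corollary~\ref{cor:McDuff} is not violated --- on the contrary, the corollary right after it says precisely that $\widetilde W\cong D^4$, which is a consistent, not contradictory, conclusion. Your attempt to manufacture a contradiction by running the exact ball theorem on $V$ only re-derives Corollary~\ref{cor:Hofer1}: a contractible periodic Reeb orbit on $(M,\xi)$. That is not a contradiction; overtwisted contact manifolds \emph{do} carry contractible Reeb orbits, and nothing in the ambient geometry of $\widetilde W\cong(D^4,\omega_{\st})$ forbids this. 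The closing paragraph about bubble trees, spanning discs, and ``compatibility'' is a gesture, not an argument; no specific incompatibility is exhibited, and none exists at this level of generality.

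What is missing is the device the paper uses to force disconnectedness of the eventual boundary. Before capping off, one takes the boundary connected sum of $(W_1,\omega_1)=(W,\omega)$ with an auxiliary compact symplectic $(W_2,\omega_2)$ having \emph{disconnected} boundary of contact type (such examples are due to McDuff and Geiges). The boundary sum is performed along $(M,\xi)$ and one component of $\partial W_2$; the result is a weak filling whose boundary has (at least) two components, one of which, containing the old overtwisted disc, is still overtwisted. Attaching the Etnyre--Honda Liouville cobordism from that overtwisted component to $(S^3,\xist)$ --- via symplectic $2$-handles, which by \cite[Lemma~6.5.2]{geig08} can indeed be attached to a \emph{weak} filling, a point finessed over by your ``collar interpolation'' --- yields a weak filling whose boundary contains $S^3$ as one of at least two components. \emph{That} contradicts Corollary~\ref{cor:McDuff}. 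Without the auxiliary $W_2$ there is simply no contradiction to be had.
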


\begin{proof}
We argue by contradiction. Assume that we have a weak symplectic filling
$(W_1,\omega_1)$ of an overtwisted contact $3$-manifold $(M,\xi)$.
Let $(W_2,\omega_2)$ be a compact symplectic $4$-manifold with
disconnected boundary of contact type (i.e.\ a strong
filling), as constructed
in \cite{mcdu91} or~\cite{geig95}. Take the boundary connected
sum of $(W_1,\omega_1)$ with $(W_2,\omega_2)$ along $(M,\xi)$ and
one of the boundary components of $(W_2,\omega_2)$. The result will
be a weak symplectic filling of a disconnected contact manifold,
one of whose components is an overtwisted contact manifold
$(M_{\ot},\xi_{\ot})$.

The Liouville cobordism from $(M_{\ot},\xi_{\ot})$ to
$(S^3,\xist)$ from Theorem~\ref{thm:etho} is made up
of symplectic $2$-handles; by \cite[Lemma~6.5.2]{geig08} such a cobordism
can also be attached to a weak filling.
The resulting symplectic manifold contradicts Corollary~\ref{cor:McDuff}.
\end{proof}

\subsection{Lagrangian surfaces in~$\R^4$}
Let $i\co\Sigma\hookrightarrow (W,\omega=\rmd\lambda)$ be a Lagrangian
embedding into an exact symplectic manifold, i.e.\ $i^*\omega=0$, which means
that $i^*\lambda$ is closed, and $\dim\Sigma=(\dim W)/2$.
Such an embedding is called {\bf exact} if $i^*\lambda$ is an
exact $1$-form.

Gromov~\cite[Corollary~$2.3.\mathrm{B}_2$]{grom85} has shown that there
are no closed exact Lagrangian submanifolds in $\R^{2n}$
with its standard symplectic structure. The exact ball theorem allows us to
prove this result in~$(\R^4,\rmd\lamst)$.

\begin{cor}[Gromov]
There are no closed exact Lagrangian surfaces in standard
symplectic $4$-space.
\end{cor}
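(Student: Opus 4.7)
The plan is to remove a Weinstein tubular neighbourhood of the hypothetical Lagrangian from a large standard ball, producing a Liouville cobordism to which Theorem~\ref{thm:ball-exact} will apply.

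Suppose $\Sigma\subset(\R^4,\rmd\lamst)$ is a closed exact Lagrangian surface. I would first show $\chi(\Sigma)=0$: in the orientable case the normal bundle $N\Sigma$ is identified with $T^*\Sigma$ via $\omega_{\st}$, so the self-intersection $[\Sigma]\cdot[\Sigma]$ equals $\chi(\Sigma)$, but $[\Sigma]\cdot[\Sigma]=0$ since $H_2(\R^4)=0$. Hence $\Sigma$ is a torus (or a Klein bottle in the non-orientable case, which is handled in parallel). Equip $\Sigma$ with a flat metric $g$. Its universal cover is $\R^2$ with the flat metric and $\pi_1(\Sigma)$ is torsion-free, so every closed geodesic represents a non-trivial free homotopy class and in particular is non-contractible.

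Next I would build the cobordism. By the Weinstein neighbourhood theorem, for some $r>0$ there is a symplectomorphism $\Phi\co(D_{r}T^*\Sigma,\rmd\lambda_{\mathrm{can}})\ra(U,\omega_{\st})$ from the radius-$r$ co-disc bundle onto a neighbourhood $U$ of $\Sigma$, sending the zero section to $\Sigma$. Since $\Sigma$ is exact, I would replace $\lamst$ by a cohomologous primitive $\lambda$ vanishing on $T\Sigma$; a further exact modification, obtained by comparing $\Phi^*\lambda$ with $\lambda_{\mathrm{can}}$ via the retraction of $D_{r}T^*\Sigma$ onto its zero section, then arranges $\Phi^*\lambda=\lambda_{\mathrm{can}}$ on a slightly smaller co-disc bundle. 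Choose $R>0$ with $\Sigma\subset\{|z|<R\}$ and set
\[ W:=\{|z|\leq R\}\setminus\Phi(\Int D_{r'}T^*\Sigma) \]
for some $r'<r$, equipped with the primitive $\lambda$. A global rescaling of $\omega$ and $\lambda$ by a constant brings the outer boundary to $(S^3,\alst)$ as required by condition (C5), so that $W$ becomes a Liouville cobordism from $M_-:=\partial D_{r'}T^*\Sigma$ with induced contact form $\alpha_-:=\lambda|_{TM_-}$ to $(S^3,\alst)$, satisfying (C1)--(C5).

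Since $\partial W$ is disconnected, $W$ cannot be diffeomorphic to $D^4$. Theorem~\ref{thm:ball-exact} would then force the existence of a contractible closed Reeb orbit of $\alpha_-$ of period at most $\pi$. But the Reeb flow of $\lambda_{\mathrm{can}}$ on the radius-$r'$ co-sphere bundle is a reparametrisation of the geodesic flow on $(\Sigma,g)$, so such an orbit would project to a contractible closed geodesic, contradicting the flatness of $g$. The main technical obstacle I anticipate is the matching of primitives across the Weinstein neighbourhood, so that the resulting cobordism is genuinely Liouville with the correct contact form on the concave end; once this is in place, the corollary follows immediately from the exact ball theorem.
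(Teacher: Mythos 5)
Your cobordism construction and the geodesic-flow contradiction are essentially the paper's proof; the normalisation of the primitive via the retraction of the co-disc bundle onto the zero section is a repackaging of the paper's cohomological argument ($[\lamst-\lambda_0]$ maps to $0$ in $H^1_{\mathrm{dR}}(\Sigma)$, hence vanishes in $H^1_{\mathrm{dR}}(U)$), and both work. However, your topological preliminary has a gap. The self-intersection computation you sketch is an orientable-case argument; for a non-orientable closed Lagrangian surface in $\R^4$ the correct constraint (obtained from the twisted normal Euler class of $T^*\Sigma$, cf.\ \cite[Section~3.2]{alp94}) is $\chi(\Sigma)\equiv 0\pmod 4$, not $\chi(\Sigma)=0$. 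Thus the Klein bottle is not the only non-orientable possibility: non-orientable surfaces with $\chi=-4,-8,\ldots$ satisfy the constraint, and in fact these (unlike the Klein bottle, which by Nemirovski--Shevchishin admits no Lagrangian embedding at all) do embed Lagrangianly. For such a surface a \emph{flat} metric $g$ does not exist, so your final step ("flatness of $g$") breaks down. The fix is exactly what the paper does: equip $\Sigma$ with a metric of constant curvature $K\leq 0$, so that the universal cover is the Euclidean \emph{or hyperbolic} plane, neither of which carries a closed geodesic; the projected Reeb orbit would lift to one, giving the same contradiction.
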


\begin{proof}
A bundle-theoretic argument shows that a necessary condition
for a closed surface $\Sigma$ to admit a Lagrangian embedding in $\R^4$
is that $\Sigma$ be a torus or a non-orientable surface
of Euler characteristic divisible by~$4$, cf.~\cite[Section~3.2]{alp94}.
Moreover, all these surfaces, except the Klein bottle,
actually admit a Lagrangian embedding~\cite{give86,nemi09,shev09}.

We prove the corollary by contradiction. Our argument applies to
all surfaces of genus at least~$1$, which by the foregoing remark
covers all potential cases. Thus, suppose that $i\co\Sigma\hookrightarrow
(\R^4,\rmd\lamst)$ is an exact Lagrangian embedding of such a
surface~$\Sigma$. Write $i^*\lamst=\rmd f$ for some smooth function
$f$ on~$\Sigma$. By Weinstein's neighbourhood theorem~\cite{wein71},
a small tubular neighbourhood $U$ of $i(\Sigma)$ in $(\R^4,\omega_{\st})$
may be identified symplectomorphically with a neighbourhood of the zero
section in the cotangent bundle $T^*\Sigma$ with its standard
symplectic form $\rmd\lambda_0$, where $\lambda_0$ is the Liouville
$1$-form $\bfp\,\rmd\bfq$ (in local coordinates $\bfq$
on $\Sigma$ and their dual coordinates~$\bfp$).

Under the identification provided by Weinstein's theorem, we regard
$\lambda_0$ as a $1$-form defined on $U\subset\R^4$. Equip $\Sigma$
with the Riemannian metric of constant curvature $K\leq 0$
(here we use the genus restriction); this induces
a bundle metric on $T^*\Sigma$. Choose
a smaller tubular neighbourhood~$U_0$ that corresponds to
a disc neighbourhood $\{ \|\mathbf{p}\|<\varepsilon\}$
in the Weinstein model and whose closure is contained in~$U$.
Then the boundary $\partial U_0$
is transverse to the radial Liouville vector field
$\mathbf{p}\,\partial_{\mathbf{p}}$ for $\rmd\lambda_0$ in the Weinstein
model. In particular, the restriction of $\lambda_0$ to $T\partial U_0$
is a contact form.

We have $\rmd (\lamst-\lambda_0)=0$ on~$U$.
This implies that the $1$-form $\lamst-\lambda_0$ represents
a de Rham cohomology class $[\lamst-\lambda_0]\in H^1_{\mathrm{dR}}(U)
\cong  H^1_{\mathrm{dR}}(\Sigma)$. Moreover, we have $i^*\lamst=\rmd f$
and $i^*\lambda_0=0$, hence $i^*[\lamst-\lambda_0]=0\in
H^1_{\mathrm{dR}}(\Sigma)$. It follows that $[\lamst-\lambda_0]=0\in
H^1_{\mathrm{dR}}(U)$, so there is a smooth function $g$ on $U$
such that $\lamst-\lambda_0=\rmd g$. Let $\tilde{g}$ be a smooth
interpolation between $0$ on $U_0$ and $g$ near the boundary of~$U$.
Then $\lambda_0+\rmd\tilde{g}$ defines a primitive of $\omega_{\st}$
that coincides with $\lambda_0$ on~$U_0$, and with $\lamst$
near the boundary of~$U$, and so extends to a global primitive $\lambda$
of~$\omega_{\st}$.

Now let $S^3_R\subset\R^4$ be the sphere of radius~$R$ (centred at~$0$),
where $R$ is chosen so large that $U$ is contained in the
interior of~$S^3_R$. Then the complement of $U_0$ in the $4$-ball $D^4_R$
of radius $R$ with the symplectic form $\rmd\lambda$ constitutes a
Liouville cobordism between $(\partial U_0,\lambda_0|_{T\partial U_0})$
and $(S^3,R^2\alst)$. So the desired contradiction will follow
from the exact ball theorem, provided we can show
there are no contractible periodic Reeb orbits on~$\partial U_0$.

The Reeb flow on $\partial U_0$ corresponds to the geodesic flow
on the unit tangent bundle of~$\Sigma$, cf.~\cite[Theorem~1.5.2]{geig08}.
Hence, a contractible periodic Reeb orbit would correspond to
a contractible closed geodesic on~$\Sigma$, which in turn would
lift to a closed geodesic on the universal cover of~$\Sigma$. This is
clearly impossible when that cover is the Euclidean or hyperbolic plane.
\end{proof}
\subsection{Reeb dynamics}
In \cite{rabi79} Rabinowitz showed the existence of periodic
solutions of the Hamiltonian equation on $\R^{2n}$ on any
star-shaped level surface of any given Hamiltonian function.
This led Weinstein~\cite{wein79} to conjecture (in modern parlance)
the existence of closed Reeb orbits on arbitrary contact type
hypersurfaces in symplectic manifolds.

In dimension~3 this conjecture has been resolved positively
by Taubes~\cite{taub07}, using Seiberg--Witten--Floer theory.
Our ball theorems allow us to retrace most of the earlier
results in the history of the Weinstein conjecture, and
they yield new existence statements about {\em contractible}
periodic orbits.

\begin{cor}[Rabinowitz]
\label{cor:Rabinowitz}
Let $S\subset\R^4$ be a smooth hypersurface bounding a
domain star-shaped with respect to $0\in\R^4$. Then the contact form
$\lamst|_{TS}$ has a closed and obviously contractible Reeb orbit.
\end{cor}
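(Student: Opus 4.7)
The plan is to realise $S$ as the concave end of a symplectic cobordism of the type described in Section~\ref{section:ball} and then invoke Theorem~\ref{thm:ball}.

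First I would check that $S$ is of contact type with contact form $\lamst|_{TS}$. The global radial Liouville vector field $Y=\tfrac12\sum_{i=1}^2(x_i\,\partial_{x_i}+y_i\,\partial_{y_i})$ for $\omega_{\st}$ satisfies $i_Y\omega_{\st}=\lamst$, and star-shapedness of the bounded domain $V$ with $\partial V=S$ says precisely that every ray from the origin meets $S$ transversely in exactly one point. Hence $Y$ is transverse to~$S$, and $\lamst|_{TS}$ is a positive contact form.

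After pre-composing with a global dilation $x\mapsto x/c$ of $\R^4$ for $c$ sufficiently large (this merely rescales all Reeb periods by the fixed factor~$c^2$, so in particular preserves the existence of closed orbits), I may assume $\overline V\subset\Int(D^4)$. I would then set
\[
(W,\omega):=(\overline{D^4\setminus V},\,\omega_{\st})
\]
and verify the axioms (C1)--(C5). Condition (C1) is automatic because $\omega=\rmd\lamst$ is exact, so by Stokes there are no symplectic $2$-spheres at all; (C2) holds with $M_-=S$ and $M_+=\emptyset$, the orientations matching because $Y$ points out of $V$ and hence into $W$ along~$S$; (C3) holds with $\alpha_-:=\lamst|_{TS}$ since $\omega_{\st}=\rmd\lamst$ globally; (C4) is vacuous; and (C5) is literally satisfied because a neighbourhood of $S^3=\partial D^4$ in $W$ coincides with the corresponding neighbourhood in $(D^4,\omega_{\st})$.

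Since $W$ deformation retracts onto~$S$ it has two boundary components and in particular is not diffeomorphic to~$D^4$. Theorem~\ref{thm:ball} therefore forces $\inf(\alpha_-)\le\pi$, giving a closed Reeb orbit on~$S$; as a loop in $\R^4$ it is automatically null-homotopic, whence the adverb ``obviously''. I do not expect any serious obstacle: the corollary is essentially an unpacking of Theorem~\ref{thm:ball} in the simplest possible cobordism, and the only mildly technical point is the preliminary rescaling needed to meet condition (C5) literally rather than up to a constant factor.
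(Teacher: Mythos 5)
Your proposal is correct and follows the paper's proof almost verbatim: realise $S$ as the concave end of a cobordism to a round sphere and invoke the ball theorem; the only cosmetic difference is that you shrink $S$ to fit inside the unit ball rather than taking a large sphere $S^3_R$ as the paper does, which avoids having to cite the rescaling convention. (One minor slip in the write-up: ``$W$ deformation retracts onto $S$, hence has two boundary components'' is a non sequitur — $W$ visibly has the two boundary components $S$ and $S^3$ by construction, and that alone rules out $W\cong D^4$.)
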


\begin{proof}
Let $S^3_R\subset\R^4$ be the sphere of radius~$R$ (centred at~$0$),
where $R$ is chosen so large that $S$ is contained in the
interior of~$S^3_R$. Then the region $W$ between $S$ and $S^3_R$
with the symplectic form $\rmd\lamst$ constitutes a symplectic cobordism
between $(S,\lamst|_{TS})$ and $(S^3,R^2\alst)$ as in the ball theorems.
\end{proof}

The contact structure $\ker(\lamst|_{TS})$ in the theorem of Rabinowitz is
always diffeomorphic to the standard tight contact structure $\xist$
on~$S^3$. Hofer~\cite{hofe93} was the first to prove the Weinstein conjecture
for arbitrary contact forms on~$S^3$.

\begin{cor}[Hofer]
\label{cor:Hofer2}
The Reeb vector field of any contact form on $S^3$ has
a periodic Reeb orbit.
\end{cor}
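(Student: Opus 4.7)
My plan is to dichotomise on whether the contact structure $\ker\alpha$ is overtwisted or tight. If $\ker\alpha$ is overtwisted, then Corollary~\ref{cor:Hofer1} applies verbatim and delivers a contractible periodic Reeb orbit of~$\alpha$, which is stronger than what we need here.

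In the tight case I would first invoke Eliashberg's classification of tight contact structures on~$S^3$ to produce a contactomorphism $\phi\co (S^3,\ker\alpha)\to (S^3,\xist)$. Since $\phi_*\alpha$ is then a contact form defining the standard contact structure~$\xist$, it can be written as $\phi_*\alpha=h\alst$ for some smooth positive function $h$ on~$S^3$, and Reeb orbits of $\alpha$ correspond under $\phi$ to Reeb orbits of $h\alst$. My second step is to realise $(S^3,h\alst)$ as a star-shaped hypersurface in $(\R^4,\rmd\lamst)$ via the radial dilation
\[
\Phi\co S^3\lra\R^4,\quad x\longmapsto\sqrt{h(x)}\,x,
\]
whose image is the star-shaped hypersurface $S=\{y\in\R^4\setminus\{0\}:|y|^2=h(y/|y|)\}$. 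A short computation using $\lamst=i_E\omega_{\st}$, where $E$ is the radial Liouville vector field on~$\R^4$, shows that $\Phi^*(\lamst|_{TS})=h\alst$, so the Reeb flow of $h\alst$ on $S^3$ is conjugate under $\Phi$ to the Reeb flow of the contact form $\lamst|_{TS}$ on the star-shaped hypersurface~$S$. Corollary~\ref{cor:Rabinowitz} then supplies the desired periodic orbit.

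The only non-trivial external ingredient in this argument is Eliashberg's uniqueness of the tight contact structure on~$S^3$; everything else is a matter of quoting the two earlier corollaries and carrying out the bookkeeping for the radial rescaling. In particular, no new holomorphic disc analysis is required at this point, since the hard work has already been absorbed into the ball theorem and its corollaries invoked above.
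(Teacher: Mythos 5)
Your proposal is correct and matches the paper's own proof essentially verbatim: the paper also dichotomises on overtwisted vs.\ tight, handles the overtwisted case via Corollary~\ref{cor:Hofer1}, invokes Eliashberg's uniqueness of the tight contact structure on $S^3$ to write the contact form (up to a diffeomorphism) as $f\alst$, and realises this on the star-shaped hypersurface $S=\{\sqrt{f(p)}\,p\colon p\in S^3\}$ so that Corollary~\ref{cor:Rabinowitz} applies. The only difference is cosmetic --- you phrase the contactomorphism via a pushforward $\phi_*\alpha=h\alst$ rather than a pullback $\varphi^*\alpha=f\alst$ --- and your verification that $\Phi^*(\lamst|_{TS})=h\alst$ is the same computation the paper leaves implicit.
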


\begin{proof}
For contact forms defining an overtwisted contact structure,
this is contained in Corollary~\ref{cor:Hofer1}.

For tight contact structures one has the following argument
from~\cite{hofe93}.  As shown by Eliashberg~\cite{elia92}, there is a unique
positive, (co-)oriented tight contact structure on~$S^3$ up to isotopy.
Thus, if $\alpha$ is a contact form defining any tight contact structure,
there is a diffeomorphism $\varphi$ of $S^3$ such that
$\varphi^*\alpha=f\alst$ for some smooth function $f\co S^3\ra\R^+$.

Now consider the star-shaped hypersurface
\[ S:=\{ \sqrt{f(p)}\, p\co p\in S^3\}.\]
According to Corollary~\ref{cor:Rabinowitz}, the contact form
$\lamst|_{TS}$ has a periodic Reeb orbit.
Under the map $S^3\ra S$, $p\mapsto\sqrt{f(p)}\, p$,
the $1$-form $\lamst|_{TS}$ pulls back to $f\alst$. So $f\alst$
and hence $\alpha$ likewise have periodic Reeb orbits.
\end{proof}

In a different direction, the result of Rabinowitz was extended
by Viterbo~\cite{vite87}.

\begin{defn}
A hypersurface $M$ in a symplectic manifold $(W,\omega)$
is said to be of {\bf contact type} (or locally $\omega$-convex)
if there is a Liouville vector field for $\omega$ defined near and
transverse to~$M$. The hypersurface is said to be of
{\bf restricted contact type} (or globally $\omega$-convex)
if the Liouville vector field is defined on all of~$W$.
\end{defn}

Viterbo proved the Weinstein conjecture for compact contact type
hypersurfaces in standard symplectic~$\R^{2n}$. Our ball theorem covers
this result in dimension~$4$.

\begin{cor}[Viterbo]
Let $M_-\subset (\R^4,\omega_{\st})$ be a smooth compact hypersurface
and $Y$ a Liouville vector field for $\omega_{\st}$ defined near
and transverse to~$M_-$. Then the contact form
$\alpha_-:=(i_Y\omega_{\st})|_{TM_-}$ has a periodic Reeb orbit.
\end{cor}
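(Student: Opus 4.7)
The plan is to mimic the proof of Rabinowitz's corollary (Corollary~\ref{cor:Rabinowitz}). Since $M_-$ is compact, it separates $\R^4$ into a bounded component $V$ and an unbounded one. I choose $R>0$ so large that $M_-$ lies in the interior of the ball $D^4_R$, and let $W$ be the region between $M_-$ and $S^3_R$, i.e.\ $W:=D^4_R\setminus\Int V$, equipped with $\omega_{\st}=\rmd\lamst$. Then $\partial W=M_-\sqcup S^3_R$. The rescaling $z\mapsto z/R$ sends $S^3_R$ to the standard sphere $S^3=\partial D^4$, so that the outer boundary of (the rescaled) $(W,\omega_{\st})$ becomes the standard contact sphere and condition~(C5) is satisfied. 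Conditions (C2)--(C4) are immediate (with $M_+=\emptyset$), and (C1) is automatic since $\omega_{\st}$ is exact on~$W$.

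The crucial point is the direction of $Y$ at $M_-$. I claim $Y$ must point outward from $V$ (equivalently, into $W$ across~$M_-$), so that $M_-$ is a strongly concave boundary of $W$ with induced contact form~$\alpha_-$. If instead $Y$ pointed inward to~$V$, then $M_-$ would be a convex boundary of~$W$, and both components of $\partial W$ would be weakly convex with one of them equal to $(S^3,\xist)$. Corollary~\ref{cor:McDuff} would then force $\partial W$ to be connected, contradicting $M_-\neq\emptyset$. So this case cannot occur.

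Hence the rescaled $(W,\omega_{\st})$ fits the hypotheses of Theorem~\ref{thm:ball}, with concave contact form $\alpha_-/R^2$ on~$M_-$ (the factor $R^{-2}$ accounting for the rescaling of~$\omega_{\st}$). Since $\partial W$ is disconnected, $W$ is not diffeomorphic to a $4$-ball, so the ball theorem yields $\inf(\alpha_-/R^2)\leq\pi$; in particular, $\alpha_-$ admits a closed Reeb orbit (of period at most~$\pi R^2$). The main obstacle is the orientation analysis ruling out the ``wrong'' sign of~$Y$; this is a short McDuff argument, but it is the one step that goes beyond a mechanical reproduction of Rabinowitz's proof. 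Once settled, everything else is a direct translation of that argument.
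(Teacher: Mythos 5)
Your argument is exactly the one the paper gives: embed $M_-$ in a large ball, form the cobordism $W$ between $M_-$ and $S^3_R$, and use Corollary~\ref{cor:McDuff} to rule out the possibility that $Y$ points out of $W$ at $M_-$, after which Theorem~\ref{thm:ball} (applied up to the rescaling factor $R^2$) forces a short Reeb orbit because $\partial W$ is disconnected. The only detail you should add is the paper's reduction to the case that $M_-$ is connected, since otherwise ``bounded component $V$'' is not well-defined; this is harmless since any closed Reeb orbit on a component of $M_-$ proves the claim.
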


\begin{proof}
Without loss of generality we take $M_-$ to be connected.
Then this hypersurface separates $\R^4$ into a bounded and an
unbounded part. Choose a large sphere $S^3_R$ containing
$M_-$ in the interior, and write $W$ for the part between $M_-$ and~$S^3_R$.

The Liouville vector field $Y$ near $M_-$ points into~$W$,
otherwise Corollary~\ref{cor:McDuff} would be violated.
So the ball theorem applies.
\end{proof}

\begin{rem}
A neighbourhood of $M_-\subset (\R^4,\omega_{\st})$ looks like a
neighbourhood of $\{0\}\times M_-$ in the symplectisation
$(\R\times M_-,\rmd (\rme^s\alpha_-))$.
So we can form the symplectic manifold $(-\infty,0]\times M_-
\cup_{M_-}W$, with $W$ as in the preceding proof and
symplectic form $\rmd (\rme^s\alpha_-)$ on $(-\infty,0]\times M_-$.
Any contact form defining the contact structure
$\ker\alpha_-$ can be realised, up to a constant scale,
on a graph in this half-symplectisation. So the theorem holds
for any such contact form.
\end{rem}

For hypersurfaces of restricted contact type we get a stronger result.

\begin{cor}
Let $M_-\subset (\R^4,\omega_{\st})$ be a smooth compact hypersurface
and $Y$ a Liouville vector field for $\omega_{\st}$ defined on all
of $\R^4$ and transverse to~$M_-$. Then the contact form
$\alpha_-:=(i_Y\omega_{\st})|_{TM_-}$ has a contractible periodic Reeb orbit.
\end{cor}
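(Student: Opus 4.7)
The plan is to apply the exact ball theorem. The crucial observation is that since $Y$ is defined globally on~$\R^4$, the $1$-form $\lambda:=i_Y\omega_{\st}$ is a \emph{global} primitive of~$\omega_{\st}$: indeed $\rmd\lambda=L_Y\omega_{\st}=\omega_{\st}$, and by construction $\lambda|_{TM_-}=\alpha_-$. This is the essential upgrade over the previous corollary (where only local contact type was assumed): we now have a globally defined primitive of $\omega_{\st}$ that restricts to $\alpha_-$ on the concave boundary, which is precisely the hypothesis of Theorem~\ref{thm:ball-exact}.

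I would then take $W$ to be the compact region bounded by $M_-$ and a large sphere $S^3_R$ containing $M_-$ in its interior, equipped with~$\omega_{\st}$, and verify conditions (C1)--(C5). Condition (C1) is automatic by Stokes' theorem, since $\omega_{\st}$ is exact. Condition (C5) is supplied by the standard radial Liouville field near~$S^3_R$ (after absorbing a constant scale factor into the normalisation). Conditions (C3) and (C4) follow immediately from $\omega_{\st}=\rmd\lambda$, $\lambda|_{TM_-}=\alpha_-$, and the fact that $M_+$ is empty. The one point requiring care is the orientation condition (C2): I need $Y$ to point \emph{into} $W$ along $M_-$, so that $(M_-,\alpha_-)$ appears as a strongly concave boundary component. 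But if $Y$ pointed outward, then $(W,\omega_{\st})$ would have only weakly convex boundary components, one of which is $(S^3,\xist)$, directly contradicting Corollary~\ref{cor:McDuff}. So $Y$ must point inward.

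Once these conditions are in hand, Theorem~\ref{thm:ball-exact} yields the dichotomy: either $\inf_0(\alpha_-)\leq\pi$ (up to the scale factor at $S^3_R$), or $W$ is diffeomorphic to~$D^4$. The latter is impossible because $M_-$ is a nonempty boundary component of $W$ distinct from the outer sphere. Hence $\alpha_-$ admits a contractible periodic Reeb orbit, as required. I expect the only genuinely nonobvious step to be the orientation argument for~(C2), which is where the \emph{global} (as opposed to merely local) existence of the Liouville field $Y$ combines with Corollary~\ref{cor:McDuff} to do the real work; everything else is a formal consequence of the global exactness that the restricted contact type hypothesis hands us for free.
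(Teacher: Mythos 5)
Your proof is correct and essentially parallels the paper's: take $W$ to be the bounded region between $M_-$ and a large sphere $S^3_R$, settle the orientation at $M_-$ by appeal to Corollary~\ref{cor:McDuff}, and apply Theorem~\ref{thm:ball-exact}. The one small divergence concerns the choice of primitive: the paper notes that $i_Y\omega_{\st}$ and $\lamst$ differ by a form that is exact near $S^3_R$ (since $H^1_{\mathrm{dR}}(S^3_R)=0$), and interpolates to produce a global primitive equal to $i_Y\omega_{\st}$ near $M_-$ and to $\lamst$ near $S^3_R$, whereas you use $i_Y\omega_{\st}$ directly. Your shortcut is legitimate because the hypothesis of the exact ball theorem only constrains $\lambda$ along $M_-$, and condition~(C5) is a statement about $\omega$ alone; the interpolation in the paper just makes the match with~(C5) more transparent. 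You should, however, state explicitly (as the paper does in the preceding Viterbo corollary) that one may assume $M_-$ connected without loss of generality, so that the bounded region $W$ is well defined and the McDuff argument applies cleanly.
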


\begin{proof}
Choose $S^3_R$ as in the preceding proof. The symplectic form $\omega_{\st}$
has the two global primitives $i_Y\omega_{\st}$ and $\lamst$.
Since $H^1_{\mathrm{dR}}(S^3_R)=0$, the difference $i_Y\omega_{\st}-\lamst$
is exact in a neighbourhood of $S^3_R$. So we can easily
construct a primitive of $\omega_{\st}$ that coincides with
$i_Y\omega_{\st}$ near $M_-$ and with $\lamst$ near~$S^3_R$.
Then the result follows from the exact ball theorem.
\end{proof}

Implicit in that argument is the simple observation that
a hypersurface of contact type with $H^1_{\mathrm{dR}}=0$ is
automatically of restricted contact type. But there are examples of
hypersurfaces of restricted contact type with $H^1_{\mathrm{dR}}\neq 0$,
for instance the connected sum of copies of $S^2\times S^1$;
this example can be constructed with the help
of \cite[Th\'eor\`eme~1]{laud97}.
So the existence of a {\em contractible} periodic orbit is
not just a consequence of topology.
On the other hand, the connected sum of copies of $S^2\times S^1$ has
non-trivial second homotopy group. So here the existence of a
contractible periodic Reeb orbit also follows from
Hofer's work~\cite[Theorem~9]{hofe93}.

The next proposition gives a further surgical construction
of contact manifolds having contractible periodic Reeb orbits.

\begin{prop}
\label{prop:Reeb-surgery}
Let $(M,\xi)$ be a closed contact $3$-manifold that is obtained
from $(S^3,\xist)$ by contact $(+1)$-surgery along a Legendrian
link. Then every contact form defining $\xi$ has a contractible
periodic Reeb orbit.
\end{prop}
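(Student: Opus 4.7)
The plan is to reduce the proposition to the exact ball theorem (Theorem~\ref{thm:ball-exact}) by constructing, for each contact form $\alpha$ defining $\xi$, a Liouville cobordism from $(M,\alpha)$ to $(S^3,\alst)$ and then invoking that theorem. Since the cobordism will have two non-empty boundary components, it cannot be diffeomorphic to $D^4$, so the dichotomy forces $\inf_0(\alpha)<\infty$. To build the cobordism, I would invoke the cancellation lemma \cite[Section~3]{dige04} (cf.\ \cite[Proposition~6.4.5]{geig08}) already used in the proof of Theorem~\ref{thm:etho}: contact $(+1)$- and $(-1)$-surgeries along the same Legendrian knot cancel. Since $(M,\xi)$ is obtained from $(S^3,\xist)$ by contact $(+1)$-surgery along a Legendrian link $L$, the sphere $(S^3,\xist)$ is recovered from $(M,\xi)$ by contact $(-1)$-surgeries along the link dual to $L$. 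Realising these as Weinstein $2$-handle attachments to $(M,\xi)$ produces a Liouville cobordism $(W,\rmd\lambda_0)$ with concave boundary $(M,\xi)$ and convex boundary $(S^3,\xist)$.

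The critical step is to adjust the primitive so that it restricts to the given $\alpha$ on the concave boundary. Let $\alpha_0 := \lambda_0|_{TM}$ and write $\alpha=\rme^h\alpha_0$. Remark~\ref{rem:exact} explicitly warns that a direct modification of the primitive at a concave end is not generally possible; instead, I would complete $W$ by gluing on the negative cylindrical end $((-\infty,0]\times M,\rmd(\rme^s\alpha_0))$ via the Liouville flow and truncate at the graph $\Gamma_h=\{(h(p),p):p\in M\}$. A preliminary global rescaling of $\lambda_0$ by a large constant ensures $h<0$, so that $\Gamma_h$ lies in the cylindrical extension; a short computation shows that the pullback of $\rme^s\alpha_0$ to $\Gamma_h$ equals $\rme^h\alpha_0=\alpha$, giving a Liouville cobordism $W'$ whose primitive restricts to $\alpha$ on the concave boundary. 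At the convex end I would apply Remark~\ref{rem:exact} together with a further global rescaling to bring the primitive near $S^3$ into the standard form demanded by (C5). None of the rescalings affects the existence of a closed orbit, only the constant in the eventual period bound.

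Conditions (C1)--(C5) now hold for $W'$: (C1) is automatic in the exact case, and the remaining conditions follow from the construction. Theorem~\ref{thm:ball-exact} then yields the dichotomy $\inf_0(\alpha)<\infty$ or $W'\cong D^4$; the latter is immediately ruled out because $W'$ has two non-empty boundary components, so a contractible periodic Reeb orbit of $\alpha$ must exist. The main obstacle is precisely the concave-end adjustment flagged by Remark~\ref{rem:exact}; the graph-cutting trick in the symplectization is the key technical input that makes every contact form $\alpha$ realisable on the concave boundary.
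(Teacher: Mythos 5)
Your argument is correct and follows the paper's own route: cancellation lemma to produce a Liouville cobordism from $(M,\xi)$ to $(S^3,\xist)$, then realising the chosen contact form $\alpha$ on the concave end by a graph in the negative cylindrical completion (and rescaling as needed), before invoking the exact ball theorem. The paper's proof is terser—it simply refers to the discussion at the end of Section~\ref{section:ball} and Remark~\ref{rem:exact} for the graph/cylinder adjustment—whereas you have spelled the graph-cutting step out explicitly; the underlying mechanism is the same.
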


\begin{proof}
By the cancellation lemma \cite[Proposition~6.4.5]{geig08} the
assumption of the proposition is equivalent to saying that
$(S^3,\xist)$ can be obtained from $(M,\xi)$ by contact $(-1)$-surgeries.
This means that there is a Liouville cobordism from $(M,\xi)$
to $(S^3,\xist)$. As described at the end of Section~\ref{section:ball},
this allows us to build a cobordism as in the exact ball theorem
for any choice of contact form defining~$\xi$.
\end{proof}

Here are three examples to which this proposition applies.

\begin{exs}
(1) Contact $(+1)$-surgery on $(S^3,\xist)$ along a standard Legendrian
unknot yields $S^2\times S^1$ with its
standard contact structure (as described in Section~\ref{section:S2S1}
below), see~\cite[Lemma~4.3]{dgs04}. For this example the existence of
a contractible periodic Reeb orbit also follows directly from~\cite{hofe93},
where Hofer proved the Weinstein conjecture for $3$-manifolds with
non-trivial second homotopy group.

(2) Contact $(+1)$-surgery on the Legendrian realisation of
the right-handed trefoil as in Figure~\ref{figure:trefoil}
(showing the front projection of that Legendrian knot)
produces a tight contact structure on the Brieskorn manifold
$\Sigma(2,3,4)$ with the opposite of its natural
orientation, see~\cite[p.~206]{ozst04}.
The universal cover is~$S^3$,
see~\cite{miln75}, so $\Sigma(2,3,4)$  has trivial second homotopy group.

\begin{figure}[h]
\labellist
\small\hair 2pt
\pinlabel $+1$ [tl] at 357 117
\endlabellist
\centering
\includegraphics[scale=0.5]{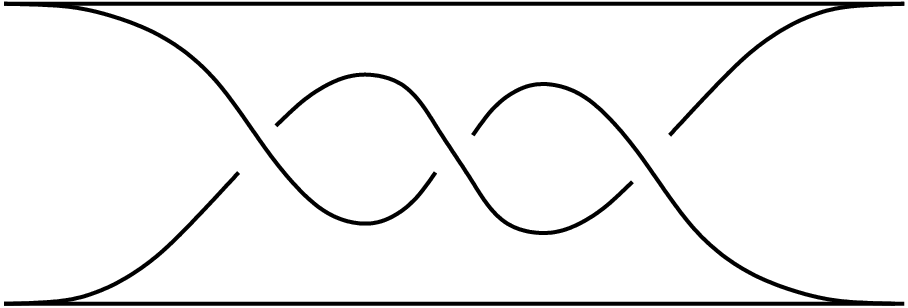}
  \caption{A tight contact structure on $\overline{\Sigma} (2,3,4)$.}
  \label{figure:trefoil}
\end{figure}

(3) Figure~12.4 of \cite{ozst04} gives an example of a tight
contact structure on the circle bundle of Euler number~$2$
over the torus, obtained by performing contact $(+1)$-surgeries
on a Legendrian link in $(S^3,\xist)$.
The second homotopy group of this manifold is trivial,
since its universal cover is~$\R^3$.
\end{exs}

\begin{rem}
Example~(2) is finitely covered by the $3$-sphere; example~(3)
is virtually overtwisted, i.e.\ finitely covered by an overtwisted
contact manifold. On these covers, a contractible Reeb orbit
is guaranteed by Corollaries~\ref{cor:Hofer2} and~\ref{cor:Hofer1},
respectively. This implies the existence of a contractible periodic Reeb
orbit downstairs. This contractible orbit may be a multiply
covered one. In example~(3) even the singly covered
orbit will be contractible, since the fundamental group
is torsion-free.
\end{rem}
\subsection{Capacities and non-squeezing}
Let $(V,\omega )$ be any $4$-dimensional symplectic manifold.
The manifold $V$ may be non-compact and disconnected.
For simplicity we assume that $V$ does not have boundary;
otherwise replace $V$ by $\Int V$ in the following definitions.
We define the following symplectic invariant of $(V,\omega)$:
\[ c(V,\omega):=\sup_{(M,\alpha)}\{\inf (\alpha)|\,
\text{$\exists$ contact type embedding
$(M,\alpha)\hookrightarrow (V,\omega)$}\} .\]
Here the supremum is taken over all closed, but not necessarily
connected contact $3$-manifolds $(M,\alpha)$. By a contact type
embedding $j\co (M,\alpha)\hookrightarrow (V,\omega)$ we mean that there
is a Liouville vector field $Y$ for $\omega$ defined near $j(M)$
such that $j^*(i_Y\omega)=\alpha$.

When $\omega=\rmd\lambda$ is exact, we can define the following
invariant:
\[ c_0(V,\lambda):=\sup_{(M,\alpha)}\{{\inf}_0 (\alpha)|\,
\text{$\exists$ embedding
$j\co (M,\alpha)\hookrightarrow (V,\rmd\lambda)$
with $j^*\lambda=\alpha$}\} .\]
In other words, here the supremum is taken over $(M,\alpha)$
admitting a restricted contact type embedding $j$ into $(V,\rmd\lambda)$,
where the global primitive $\lambda$ is fixed {\em a priori}.

In $\R^4$ with the standard symplectic form $\omega_{\st}=\rmd\lamst$ let
$B^4_r$ be the open $4$-ball of radius $r$ and $Z_r=B^2_r
\times\R^2$ the cylinder over the open $2$-ball of radius~$r$.
For $r=1$ we simply write $B^4$ and $Z$, respectively.

\begin{prop}
\label{prop:capacity}
The invariants $c(V,\omega)$ and $c_0(V,\lambda)$ are
symplectic capacities, i.e.\ they satisfy the following axioms:
\begin{description}
\item[Monotonicity] If there
exists a symplectic embedding $(V,\omega)\hookrightarrow (V',\omega')$,
then $c(V,\omega)\leq c(V',\omega')$;
similarly $c_0(V,\lambda )\leq c_0(V',\lambda')$
if there exists a symplectic embedding
$(V,\rmd\lambda )\hookrightarrow (V',\rmd\lambda')$ pulling back $\lambda'$
to~$\lambda$.
\item[Conformality] For any $a\in\R^+$ we have
$c(V,a\omega)=a\, c(V,\omega)$ and $c_0(V,a\lambda)=a\, c_0(V,\lambda)$.
\item[Normalisation] $c(B^4)=c_0(B^4)=c(Z)=c_0(Z)=\pi$.
\end{description}
\end{prop}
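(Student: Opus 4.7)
\emph{Proof proposal.}
The plan is to check monotonicity and conformality directly from the definitions, and to deduce the normalisation values from the two ball theorems.

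Monotonicity is immediate: a symplectic embedding $\Phi\co(V,\omega)\hookrightarrow(V',\omega')$ pushes any contact-type embedding $j\co(M,\alpha)\hookrightarrow V$, with local Liouville $Y$, forward to $\Phi\circ j$ with local Liouville $\Phi_*Y$, inducing the same contact form~$\alpha$. The exact version additionally needs $\Phi^*\lambda'=\lambda$ so that $(\Phi\circ j)^*\lambda'=\alpha$ is preserved. For conformality observe that $Y$ is Liouville for $\omega$ if and only if it is Liouville for $a\omega$, the induced contact form is $a\alpha$, the Reeb vector field is $R_\alpha/a$, and consequently every period is multiplied by~$a$.

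For the normalisation, the concentric family $(S^3_r,\lamst|_{TS^3_r})$ with $r<1$ provides contact-type embeddings into $B^4\subset Z$, exact since $j^*\lamst=r^2\alst$; the Reeb flow is the Hopf flow, every orbit is contractible with period $\pi r^2$, so letting $r\nearrow 1$ gives the lower bounds $c(B^4),c_0(B^4),c(Z),c_0(Z)\geq\pi$.

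For the upper bound $c(B^4)\leq\pi$, take contact-type $j\co(M,\alpha)\hookrightarrow B^4$ with local Liouville~$Y$, choose $r<1$ with $j(M)\subset B^4_r$, and pick an outermost component $M_0$ of~$j(M)$. Let $W\subset\overline{B^4_r}$ be the closure of the component of $B^4_r\setminus j(M_0)$ containing~$S^3_r$. If $Y$ on $M_0$ pointed out of~$W$, both $M_0$ and $S^3_r$ would be weakly convex boundary components of~$W$, contradicting Corollary~\ref{cor:McDuff} after rescaling $\omega_{\st}$ by $1/r^2$. Hence $Y$ points into~$W$, making $W$ a cobordism with concave end $M_0$ and convex end~$S^3_r$. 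The same rescaling (equivalently, shrinking coordinates by $1/r$) realises condition (C5) with the standard $(S^3,\alst)$; conditions (C1)-(C4) are automatic in~$\R^4$. Since $W$ has two boundary components it is not a ball, and Theorem~\ref{thm:ball} forces $\inf(\alpha|_{M_0})\leq\pi r^2<\pi$. Taking suprema yields $c(B^4)\leq\pi$. The identical argument with Theorem~\ref{thm:ball-exact} gives $c_0(B^4)\leq\pi$, since the rescaling preserves the exactness condition $j^*\lamst=\alpha$.

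The upper bound $c(Z)\leq\pi$ is the main obstacle. Repeating the previous argument with a round ball $B^4_R\supset j(M)$ only yields $\inf(\alpha)\leq\pi R^2$, and $R$ must grow with the $z_2$-extent of~$j(M)$. To recover the sharp bound $\pi$ one must exploit the $z_1$-confinement $|z_1|<1$ of~$Z$. My plan is to replace the round cap by an ellipsoidal cap $E_\varepsilon=\{|z_1|^2+\varepsilon|z_2|^2<1\}\subset Z$, chosen small enough in the sense that $\varepsilon\leq 1$ and $j(M)\subset E_\varepsilon$ (possible by compactness of $j(M)$ in $Z$), so that the shortest Reeb period on the convex end $\partial E_\varepsilon$ of the cobordism $E_\varepsilon\setminus V^{\mathrm{in}}$ is exactly~$\pi$. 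The delicate step is then to turn this into a cobordism satisfying (C5) at a standard $(S^3,\alst)$: one must graft onto $E_\varepsilon\setminus V^{\mathrm{in}}$ a further piece interpolating between $\partial E_\varepsilon$ and a standard round sphere in such a way that (C1) is preserved and no extraneous short orbits are introduced on the concave end. Once achieved, Theorem~\ref{thm:ball} (respectively Theorem~\ref{thm:ball-exact}) delivers $\inf(\alpha|_{M_0})\leq\pi$; this final grafting is the part I expect to require the most delicate work.
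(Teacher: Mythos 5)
Your treatment of monotonicity, conformality, the lower bounds via concentric spheres, and the upper bound on $c(B^4)$ and $c_0(B^4)$ is correct and essentially the paper's; you add a worthwhile clarification (absent from the paper's terse proof) that the Liouville field must point into the cobordism, which you settle via Corollary~\ref{cor:McDuff}. The genuine gap is in your argument for $c(Z)\leq\pi$. You have the right auxiliary domain---your $E_\varepsilon=\{|z_1|^2+\varepsilon|z_2|^2<1\}$ is exactly the paper's $E(1,b)$ with $b=1/\sqrt{\varepsilon}$---but you then try to graft a further cobordism taking $\partial E_\varepsilon$ to a standard round $S^3$, and this cannot produce the sharp constant: any round sphere enclosing $\partial E_\varepsilon$ has radius $R\gg 1$ growing with $1/\sqrt{\varepsilon}$, and the rescaled ball theorem at that outer boundary only delivers $\inf(\alpha)\leq\pi R^2$. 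No ``delicate grafting'' can repair this, because the period estimate produced by the ball theorem is controlled by the energy of the Bishop discs, which is governed by the geometry of the convex boundary component you actually feed into the filling argument, not by the concave end.

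The missing idea is that you should not try to convert the convex boundary to a round $S^3$ at all. The paper's observation is that Theorem~\ref{thm:ball} and Theorem~\ref{thm:ball-exact} hold verbatim with the convex boundary $S^3$ replaced by $\partial E(1,b)$: the boundary of the ellipsoid carries an analogous singular foliation by $2$-dimensional ellipsoids $E^t=\partial E(1,b)\cap\{y_2=t\}$ with two singular points at $(0,0,0,\pm b)$, and the Bishop disc filling argument of Sections~\ref{section:proof}--\ref{section:compact} goes through unchanged. The only quantitatively sensitive step is the energy bound $E(u)\leq\pi$ of Proposition~\ref{prop:energy-bound}, and its proof uses only that the projections of the leaves of the foliation to the $x_1y_1$-plane lie inside the unit disc---a property that $E^t$ shares with $S^t$. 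So one applies the ellipsoidal ball theorem directly to the cobordism between $j(M)$ (or your outermost component $M_0$) and $\partial E(1,b)$ and obtains $\inf(\alpha)\leq\pi$, respectively $\inf_0(\alpha)\leq\pi$ in the exact case. This closes the proof with no grafting and with the sharp constant.

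A minor remark: your observation that $\varepsilon\leq1$ forces the shortest Reeb period on $\partial E_\varepsilon$ to be $\pi$ is true but not what the argument needs; what matters is the energy bound on the discs, not a period bound on the convex end.
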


\begin{proof}
Monotonicity and conformality are obvious from the definition.
Write $S^3_r$ for the $3$-sphere of radius $r$, and denote
$\lamst|_{TS^3_r}$ by~$\alpha_r$. The Reeb vector field $R_r$
of $\alpha_r$ is given by
\[ R_r=\frac{2}{r^2}(x_1\,\partial_{y_1}-y_1\,\partial_{x_1}+
x_2\,\partial_{y_2}-y_2\,\partial_{x_2});\]
this has length $2/r$. All the orbits of $R_r$ are closed of
length $2\pi r$, so the period is $\pi r^2$. Since
$(S^3_r,\alpha_r)$ for $r<1$ has a (strict) contact type embedding
into the four manifolds we are considering
(with the symplectic form $\omega_{\st}$ and the global primitive~$\lamst$),
all four capacities are bounded from below by~$\pi$.

Suppose we have a (strict) contact type embedding
$j\co (M,\alpha )\hookrightarrow B^4$.
Then we get a cobordism from $j(M)$ to $S^3$ as in the
(exact) ball theorem, and these theorems tell us
that $\inf(\alpha),\inf_0(\alpha)\leq\pi$, since the cobordism is not
a ball. This concludes the proof of $c(B^4)=c_0(B^4)=\pi$.

If we have a (strict) contact type embedding
$j\co (M,\alpha )\hookrightarrow Z$,
the image $j(M)$ is contained inside an ellipsoid
\[ E(1,b)=\Bigl\{ x_1^2+y_1^2+\frac{x_2^2+y_2^2}{b^2}\leq 1\Bigr\} \]
for $b>0$ sufficiently large. The boundary of this ellipsoid has a
foliation by $2$-dimensional ellipsoids $E^t:=\partial E(1,b)\cap\{ y_2=t\}$,
$t\in (-b,b)$, outside the two singular points $(0,0,0,\pm b)$,
just as the foliation of $S^3$ by $2$-spheres
$S^t$ that we are going to consider in the proof of the ball theorems
in the next section. Moreover, the relevant energy estimate
in Proposition~\ref{prop:energy-bound} below only depends on
the fact that the projection of $S^t$ to the $x_1y_1$-plane
is contained in the unit disc, which is also true
for the projection of~$E^t$. In other words,
the ball theorems remain true with the convex boundary
component $S^3$ replaced by $\partial E(1,b)$. Now, as before,
this gives the upper bound $\pi$ on the two capacities of~$Z$
and completes the proof of the proposition.
\end{proof}

For a survey on other types of symplectic capacities see~\cite{chls07}.

\begin{rem}
\label{rem:cap}
The same proof applies to show that $c_0(B,\lambda)=\pi$
for any primitive $\lambda$ of $\omega_{\st}$
with $\lambda=\lamst$ near $\partial B^4$. For $\lambda=\lamst$
near $\partial Z$ one can only deduce $c_0(Z,\lambda )\leq\pi$.
\end{rem}

Gromov's celebrated non-squeezing theorem~\cite[p.~310]{grom85}
is now, in dimension~$4$, an immediate consequence of
Proposition~\ref{prop:capacity}.

\begin{cor}[Gromov]
There is a symplectic embedding $B^4_r\hookrightarrow Z_R$
if and only if $r\leq R$.
\end{cor}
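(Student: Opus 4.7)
The plan is to deduce non-squeezing directly from the capacity $c$ (or equally well $c_0$) constructed in Proposition~\ref{prop:capacity}, using monotonicity, conformality, and the normalisation $c(B^4)=c(Z)=\pi$.

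The easy direction is the ``if'' implication: when $r\leq R$ the inclusion $B^4_r\hookrightarrow Z_R$ is already a symplectic embedding with respect to $\omega_{\st}$.

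For the ``only if'' direction, I would begin by noting that the dilation $(x_1,y_1,x_2,y_2)\mapsto(rx_1,ry_1,rx_2,ry_2)$ is a symplectomorphism $(B^4,r^2\omega_{\st})\xrightarrow{\cong}(B^4_r,\omega_{\st})$, and similarly identifies $(Z,R^2\omega_{\st})$ with $(Z_R,\omega_{\st})$. Then suppose one has a symplectic embedding $B^4_r\hookrightarrow Z_R$. Pulling back by these dilations, monotonicity of $c$ gives $c(B^4,r^2\omega_{\st})\leq c(Z,R^2\omega_{\st})$, and by conformality this becomes $r^2c(B^4)\leq R^2c(Z)$. The normalisation statement from Proposition~\ref{prop:capacity} then yields $\pi r^2\leq\pi R^2$, hence $r\leq R$.

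There is essentially no obstacle here: all the analytic content — filling by holomorphic discs, bubbling analysis, the construction of the cobordism to $(S^3,R^2\alst)$ inside the cylinder — has already been absorbed into the proof of Proposition~\ref{prop:capacity}, in particular into the computation $c(Z)=\pi$ that exploited the foliation of $\partial E(1,b)$ by the $2$-dimensional ellipses $E^t$ and the fact that the relevant energy estimate for the disc-filling depends only on the $x_1y_1$-projection lying in the unit disc. Given that result, the non-squeezing corollary is a one-line consequence of the axioms of a symplectic capacity.
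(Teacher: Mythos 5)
Your argument is correct and is essentially the same as the paper's: identify $c(B^4_r)=\pi r^2$ and $c(Z_R)=\pi R^2$ by conformality (via the dilation symplectomorphisms) and the normalisation $c(B^4)=c(Z)=\pi$, then apply monotonicity to a putative embedding. The paper presents it just as tersely, so there is nothing to add.
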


\begin{proof}
The $4$-ball $B^4_r$ with the symplectic form $\omega_{\st}$ is
symplectomorphic to the unit ball with the symplectic
form $r^2\omega_{\st}$. Hence $c(B^4_r)=\pi r^2$ by
conformality. Similarly we have $c(Z_R)=\pi R^2$. Now the
result follows from monotonicity.
\end{proof}
\subsection{Quantitative Reeb dynamics}
With the capacities introduced in the preceding section we can
derive some simple quantitative results on shortest Reeb orbits.
Frauenfelder, Ginzburg and Schlenk~\cite[Remark~1.13.3]{fgs05}
show that an upper bound on the period of the shortest closed Reeb orbit on
a compact hypersurface $M\subset (\R^{2n},\omega_{\st})$ of
diameter $\diam(M)$ is $\pi(\diam(M))^2$. We recover their result
in dimension~$4$, where we improve the constant by a
reference to~\cite{jung01}.

\begin{cor}
Let $(M,\alpha)\subset (\R^4,\omega_{\st})$ be
a compact hypersurface of contact type. Then
$\inf(\alpha)\leq(2/5)\pi(\diam(M))^2$.
\end{cor}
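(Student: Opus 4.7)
The plan is to deduce the bound from two ingredients: the capacity computation $c(B^4)=\pi$ from Proposition~\ref{prop:capacity}, and Jung's theorem on enclosing balls~\cite{jung01}. Together these reduce the problem to fitting $M$ into the smallest possible round ball.

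First I would invoke Jung's theorem in dimension~$4$: any compact set of diameter $d$ in $\R^4$ is contained in a closed Euclidean ball of radius
\[ R=d\sqrt{\tfrac{4}{2\cdot 5}}=d\sqrt{\tfrac{2}{5}}.\]
After translating coordinates we may assume this ball is centred at the origin. So $M\subset B^4_{R+\varepsilon}$ (the open ball of radius $R+\varepsilon$) for every $\varepsilon>0$.

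Second, the hypothesis that $M$ is of contact type in $(\R^4,\omega_{\st})$ means that $(M,\alpha)\hookrightarrow B^4_{R+\varepsilon}$ is a contact type embedding in the sense of the definition of the capacity~$c$. By the definition of $c$, this gives $\inf(\alpha)\leq c(B^4_{R+\varepsilon})$. Using the symplectomorphism $z\mapsto z/(R+\varepsilon)$ between $(B^4_{R+\varepsilon},\omega_{\st})$ and $(B^4,(R+\varepsilon)^2\omega_{\st})$ together with the conformality and normalisation axioms from Proposition~\ref{prop:capacity}, we obtain $c(B^4_{R+\varepsilon})=\pi(R+\varepsilon)^2$.

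Finally, letting $\varepsilon\to 0$ and substituting the Jung bound $R^2\leq(2/5)(\diam M)^2$ yields
\[ \inf(\alpha)\leq\pi R^2\leq\tfrac{2}{5}\pi(\diam M)^2,\]
as desired. There is no real obstacle here; the only point requiring slight care is that the capacity is defined using open balls, which is handled by the limit $\varepsilon\to 0$. The improvement of the constant over the naive bound $\pi(\diam M)^2$ comes entirely from replacing the trivial estimate $R\leq\diam M$ (enclosing ball centred at a point of $M$) by the sharp Jung radius in~$\R^4$.
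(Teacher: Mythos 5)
Your proposal is correct and follows essentially the same route as the paper: Jung's bound on the circumradius in $\R^4$, translation invariance of $\omega_{\st}$ to centre an enclosing ball at the origin, and then the capacity computation $c(B^4_r)=\pi r^2$ from Proposition~\ref{prop:capacity}. The only cosmetic difference is that you make the $\varepsilon\to 0$ limit explicit where the paper simply takes the infimum over all $r$ greater than the circumradius.
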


\begin{proof}
Since the symplectic form $\omega_{\st}$ is translation-invariant, we
have a contact type embedding of $(M,\alpha)$ into $B^4_r$
for any $r$ greater than the circumradius of~$M$, which
by \cite[p.~257]{jung01} is (in dimension~$4$)
at most equal to $\sqrt{2/5}\,\diam (M)$. Hence
\[ \inf (\alpha)\leq c(B^4_r)=\pi r^2\;\;
\text{for any $r>\sqrt{2/5}\,\diam (M)$.}\qed\]
\renewcommand{\qed}{}
\end{proof}

\begin{rem}
The upper bound $\sqrt{2/5}\,\diam (M)$ for the circumradius
(in dimension~$4$) is optimal; it is attained for the regular $4$-simplex.
\end{rem}

In view of Remark~\ref{rem:cap}, the
same argument with the capacity $c$ replaced by $c_0$ gives
the next corollary.

\begin{cor}
Let $(M,\alpha)\subset (\R^4,\omega_{\st})$ be a compact hypersurface
of restricted contact type. Then $\inf_0(\alpha)\leq
(2/5)\pi(\diam(M))^2$. \hfill\qed
\end{cor}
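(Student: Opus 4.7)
The plan is to run the proof of the preceding corollary verbatim, with the capacity $c$ replaced by $c_0$ and with Remark~\ref{rem:cap} (together with conformality) supplying the normalisation for the ball. As in that proof, by translation invariance of $\omega_{\st}$ we may position $M$ inside the open ball $B^4_r$ for any $r$ strictly larger than the circumradius of~$M$, which by \cite{jung01} is at most $\sqrt{2/5}\,\diam(M)$ in dimension~$4$. The restricted contact type hypothesis ensures that the Liouville vector field $Y$ (and hence its translate) is defined on all of~$\R^4$, so that $i_Y\omega_{\st}$ is a global primitive of $\omega_{\st}$ on~$B^4_r$ restricting to $\alpha$ on~$M$.

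To fit into the hypotheses of Remark~\ref{rem:cap}, I would first modify the primitive near $\partial B^4_r$ so that it agrees with~$\lamst$ there. Since $H^1_{\mathrm{dR}}(B^4_r)=0$, the closed $1$-form $i_Y\omega_{\st}-\lamst$ is exact, say $i_Y\omega_{\st}-\lamst=\rmd g$ on~$B^4_r$. Choose a smooth cutoff function $\chi\co B^4_r\to[0,1]$ that vanishes on a neighbourhood of~$M$ and equals~$1$ on a neighbourhood of $\partial B^4_r$, and set
\[ \tilde\lambda := i_Y\omega_{\st}-\rmd(\chi g). \]
Then $\rmd\tilde\lambda=\omega_{\st}$, we have $\tilde\lambda= i_Y\omega_{\st}$ near~$M$ (so that $j^*\tilde\lambda=\alpha$ for the inclusion~$j$), and $\tilde\lambda=\lamst$ near~$\partial B^4_r$.

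Thus $j\co(M,\alpha)\hookrightarrow(B^4_r,\tilde\lambda)$ is a restricted contact type embedding in the sense used in the definition of~$c_0$. By Remark~\ref{rem:cap} together with conformality (applied via the rescaling diffeomorphism $B^4\to B^4_r$, which pulls the boundary condition $\tilde\lambda=\lamst$ back to the boundary condition required on the unit ball), we obtain $c_0(B^4_r,\tilde\lambda)=\pi r^2$. Hence
\[ {\inf}_0(\alpha)\leq c_0(B^4_r,\tilde\lambda)=\pi r^2 \quad\text{for every } r>\sqrt{2/5}\,\diam(M), \]
and letting $r\searrow\sqrt{2/5}\,\diam(M)$ yields the stated bound. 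The only step that is not entirely formal is the cutoff modification above; however, since $M$ is disjoint from $\partial B^4_r$ this is routine and I do not expect any real obstacle.
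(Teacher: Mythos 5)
Your proof is correct and follows the same route the paper indicates, where the authors simply write that ``the same argument with the capacity $c$ replaced by $c_0$ gives the next corollary'' by appeal to Remark~\ref{rem:cap}. You have correctly filled in the two points the paper leaves implicit: the cutoff modification producing a global primitive $\tilde\lambda$ of $\omega_{\st}$ on $B^4_r$ that agrees with $i_Y\omega_{\st}$ near $M$ and with $\lamst$ near $\partial B^4_r$, and the rescaling--conformality argument pinning down $c_0(B^4_r,\tilde\lambda)=\pi r^2$ from Remark~\ref{rem:cap}.
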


For star-shaped hypersurfaces we have an alternative estimate.

\begin{cor}
On the star-shaped hypersurface
\[ S:=\{ \sqrt{f(p)}\, p\co p\in S^3\}\subset\R^4,\]
with $f\co S^3\rightarrow\R^+$ a smooth function,
we have $\inf_0(\lamst|_{TS})\leq\pi\max f$.\hfill\qed
\end{cor}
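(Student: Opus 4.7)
The plan is to exhibit $S$ as a restricted contact type hypersurface inside an open $4$-ball of radius barely above $\sqrt{\max f}$, and then invoke the capacity inequality $\inf_0(\lamst|_{TS})\leq c_0(\Int B^4_R,\lamst)$ together with the computation $c_0(\Int B^4_R,\lamst)=\pi R^2$.

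For the first step I would note that the radial vector field $Y=\frac{1}{2}\sum_{i=1}^{2}(x_i\partial_{x_i}+y_i\partial_{y_i})$ is a globally defined Liouville vector field for $\omega_{\st}$ on $\R^4$ satisfying $i_Y\omega_{\st}=\lamst$. Because $S=\{\sqrt{f(p)}\,p:p\in S^3\}$ is star-shaped about the origin, $Y$ is transverse to $S$, so the inclusion $(S,\lamst|_{TS})\hookrightarrow(\R^4,\rmd\lamst)$ is of restricted contact type with global primitive $\lamst$. Since $|\sqrt{f(p)}\,p|=\sqrt{f(p)}\leq\sqrt{\max f}$, the hypersurface $S$ lies in $\Int B^4_R$ for every $R>\sqrt{\max f}$.

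For the second step, the definition of $c_0$ immediately yields $\inf_0(\lamst|_{TS})\leq c_0(\Int B^4_R,\lamst)$. The equality $c_0(\Int B^4_R,\lamst)=\pi R^2$ follows from $c_0(B^4,\lamst)=\pi$ (Proposition~\ref{prop:capacity}) by conformality, since the scaling $z\mapsto z/R$ is an exact symplectomorphism from $(\Int B^4_R,\lamst)$ to $(B^4,R^2\lamst)$. Letting $R\searrow\sqrt{\max f}$ gives $\inf_0(\lamst|_{TS})\leq\pi\max f$.

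The only minor point to watch is that $S$ may touch the sphere of radius $\sqrt{\max f}$ at the points where $f$ attains its maximum, which is why I first work with $R$ strictly larger and pass to the limit at the very end. Existence of a contractible periodic Reeb orbit on $S$ is in any case guaranteed by Corollary~\ref{cor:Rabinowitz} (every loop on $S\cong S^3$ is automatically contractible), so $\inf_0$ is finite---indeed a minimum---and there is no convergence difficulty. There is no substantial obstacle beyond this bookkeeping; all real work has already been carried out in Proposition~\ref{prop:capacity}.
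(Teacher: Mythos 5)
Your proof is correct and follows the route the paper clearly intends: realize $S$ as a restricted contact type hypersurface (via the radial Liouville field $Y$ with $i_Y\omega_{\st}=\lamst$) inside $B^4_R$ for each $R>\sqrt{\max f}$, apply the definition of $c_0$ and the computation $c_0(B^4_R,\lamst)=\pi R^2$ from Proposition~\ref{prop:capacity} and conformality, and let $R\searrow\sqrt{\max f}$. The bookkeeping with strict inequality $R>\sqrt{\max f}$ (so that $S$ lies in the open ball) is exactly right, and the appeal to Corollary~\ref{cor:Rabinowitz} for finiteness, while a nice sanity check, is not actually needed since the capacity inequality already gives $\inf_0(\lamst|_{TS})\leq\pi R^2<\infty$.
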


\begin{rem}
As shown in \cite[Section~3.5]{hoze94}, on convex hypersurfaces
in $\R^{2n}$ the minimal period $\inf (\alpha)$ equals the
Hofer--Zehnder capacity. In particular, this provides a lower
bound on $\inf (\alpha)$ in terms of the inradius, i.e.\
the radius of the largest ball that can be embedded in the
domain bounded by the hypersurface. The example of the `Bordeaux bottle'
[loc.\ cit.]\ shows that for the class of hypersurfaces
of restricted contact type there is no lower bound on $\inf (\alpha)$
in terms of the inradius.
\end{rem}
\section{Proof of the ball theorems}
\label{section:proof}
Let $(W,\omega)$ be a symplectic cobordism as in one of the
ball theorems. For the time being, only the conditions (C1) to (C5)
on $(W,\omega)$ common to both theorems will be relevant.
On some collar neighbourhood $[0,\varepsilon)\times M_-\subset W$
of the strongly concave boundary~$M_-$ the symplectic form can
be written as $\omega=\rmd (\rme^s\alpha_-)$. We define a family of
symplectic completions $(\wtW,\omega_{\tau})$
of $(W,\omega)$ along $M_-$ similar to \cite[Section~2.2]{hwz03}
as follows. Consider the family of functions
\[ \TT:=\bigl\{ \tau\in C^{\infty}\bigl( (-\infty,\varepsilon),\R^+\bigr)\co
\tau'>0,\, \tau(s)=\rme^s\;\text{for}\;
s\in [0,\varepsilon)\bigr\} .\]
Let $\wtW$ be the manifold obtained from $W$ by attaching infinite
half-cylinders along the boundary~$M_-$, i.e.
\[ \wtW:=(-\infty,0]\times M_-\cup_{M_-}W,\]
where $M_-\subset\partial W$ is identified with $\{ 0\}\times M_-$ in the
half-cylinder $(-\infty,0]\times M_-$. Then define the symplectic form
$\omega_{\tau}$ on $\wtW$ by
\[ \omega_{\tau}:=\begin{cases}
                  \omega              & \text{on $W$},\\
                  \rmd (\tau\alpha_-) & \text{on $(-\infty,0]\times M_-$}.
                  \end{cases}
\]

Next we choose an almost complex structure $J$ on $\wtW$
compatible with each $\omega_{\tau}$ and subject to the
following conditions:
\begin{itemize}
\item[(J1)] Under the identification of a collar neighbourhood
of $S^3\subset W$ in $(W,\omega)$ with a neighbourhood of
$S^3=\partial D^4$ in $(D^4,\omega_{\st})$, as stipulated by
condition (C5), $J$ looks like the standard complex structure
on $\C^2$.
\item[(J2)] On the cylindrical end $(-\infty,\varepsilon)\times M_-$,
the almost complex structure is cylindrical
and symmetric in the sense of \cite[p.~802, 807]{behwz03}, i.e.\ it
preserves $\xi_-$ and satisfies $J\partial_s=R_{\alpha_-}$.
\item[(J3)] Extend $\xi_+$ to a rank-2 distribution (still
denoted $\xi_+$) in the tangent bundle $TW$ over a neighbourhood of
$M_+$ in~$W$ such that $\omega|_{\xi_+}>0$. Choose $J$ on this
neighbourhood such that $\xi_+$ and its $\omega$-orthogonal
complement are $J$-invariant. In particular, the boundary $M_+$
is then $J$-convex.
\item[(J4)] Outside the regions described in (J1) to (J3),
the almost complex structure is required to be chosen in such a way
that $J$ is regular for spheres (globally on~$\wtW$) in
the sense of \cite[Definition~3.1.4]{mcsa04}, cf.\ the
following remark~(2).
\end{itemize}

\begin{rems}
\label{rems:J}
(1) By \cite[Remark~4.3]{geze10}, condition (J3) implies that $M_+$
can be written as the level set of a smooth function on $W$ that is strictly
plurisubharmonic in a neighbourhood of~$M_+$. Then the maximum
principle holds in that neighbourhood.

(2) A choice of $J$ as required by (J4) is possible by
\cite[Remark~3.2.3]{mcsa04}. By that remark, all that is
required to achieve regularity for spheres is that no sphere
lies entirely in the regions where $J$ is prescribed by one
of the conditions (J1) to (J3). Indeed, no such sphere
can exist, since in all these regions the maximum
principle applies. The proof of the relevant result
\cite[Theorem~3.1.5]{mcsa04} only needs to be modified in
one place in order to account for the non-compactness of~$\wtW$:
instead of requiring the condition $\|\rmd u\|_{\infty}\leq K$
(condition (3.2.3) in~\cite{mcsa04}) to hold globally, we only impose this
condition on curves $u$ with image in $[-K,0]\times M_-\cup_{M_-}W$.
On each of these compact manifolds one has an open and dense set of
regular almost complex structures, and one can then pass to the
intersection of these sets over all~$K>0$ as in~\cite{mcsa04}.

(3) According to \cite[Theorem~3.1.5]{mcsa04}, the dimension
of the moduli space of simple $J$-holomorphic spheres (quotiented by
the $6$-dimensional automorphism group of $S^2=\C P^1$)
in the homology class $A$ is given by $2c_1(A)-2$. Hence, if
$A\in H_2(\wtW;\Z)$ is represented by a non-constant holomorphic sphere,
then $c_1(A)\geq 1$.
\end{rems}

We now want to introduce a moduli space of $J$-holomorphic discs
in $\wtW$ whose boundary is required to lie in $S^3\subset\partial W$
(subject to a varying totally real boundary condition). For this we need to
recall some notation from~\cite{geze10}.

We begin with the unit sphere $S^3$ in $\C^2$ with
complex Cartesian coordinates $(z_1=x_1+iy_1,z_2=x_2+iy_2)$.
Let $H$ be the height function on $S^3$ given by projection onto
the $y_2$-coordinate. For $t\in(-1,1)$ the level sets
$S^t:= H^{-1}(t)$ define a smooth foliation of
$S^3\setminus\{(0,0,0,\pm1)\}$ by $2$-spheres.
We regard the points
\[ q^t_{\pm}:= (0,0,\pm \sqrt{1-t^2},t) \]
as the poles of these $2$-spheres.

This family of poles, together with the two poles
$(0,0,0,\pm 1)$ of $S^3$, forms an unknot
\[ K:= \bigl\{ (0,0,\pm\sqrt{1-t^2},t) \co t\in[-1,1]\bigr\}\]
in~$S^3$. The complement $S^3\setminus K$ is foliated
by circles that bound holomorphic discs
\[ D_s^t:=D^4\cap \bigl(\C\times\{ x_2=s,y_2=t\}\bigr),\;\;
|t|<1,\;\; |s|<\sqrt{1-t^2}.\]
For each $t\in (-1,1)$, the circles $\partial D_s^t$
foliate the punctured $2$-sphere $S^t\setminus\{ q^t_{\pm}\}$.

For $|t|<1$ and $|s|<\sqrt{1-t^2}$ define a smooth
real-valued function
\[ \theta (s,t):= \frac{t}{2\sqrt{1-t^2}}\cdot\ln
\left( \frac{\sqrt{1-t^2}+s}{\sqrt{1-t^2}-s} \right).\]
For each $t$ this defines a diffeomorphism from
$(-\sqrt{1-t^2},\sqrt{1-t^2})$ to~$\R$. Now consider
the parametrisations 
\[ u^t_s(z):= \bigl( \sqrt{1-s^2-t^2}\cdot\rme^{i\theta (s,t)}\cdot z,
                   s,t\bigl),\;\; z\in\D,\]
of the holomorphic discs $D^t_s$.
The rotation factor $\rme^{i\theta (s,t)}$
has been chosen in such a way that for each fixed $t\in (-1,1)$
and $z\in\D$, the map $s\mapsto u^t_s(z)$, $|s|<\sqrt{1-t^2}$,
is a parametrisation of a leaf of the characteristic
foliation on $S^t\setminus\{ q^t_{\pm}\}$ induced by the
standard contact structure $\xist$ on~$S^3$. The three
leaves corresponding to $z=i^k$, $k=0,1,2$, will be denoted by
$\ell^t_k$. These leaves will be used to put a restriction
on three marked points of the holomorphic discs in our moduli space,
which amounts to quotienting out the non-compact $3$-dimensional
automorphism group of~$\D$.

For $|s|$ sufficiently close to $\sqrt{1-t^2}$, the image of the
holomorphic disc $u^t_s$ will lie in the neighbourhood of
$S^3\subset D^4$ that has been identified with a neighbourhood
of $S^3\subset\wtW$. These discs define a relative homotopy
class $A^t\in\pi_2(\wtW,S^t\setminus\{ q^t_{\pm}\})$.
We now always take the holomorphic identification between a neighbourhood of
$S^3=\partial D^4$ in $D^4$ and $S^3\subset\partial\wtW$
in $\wtW$ for granted.

\begin{defn}
A {\bf $t$-level Bishop disc} is a smooth (up to the boundary)
$J$-holo\-mor\-phic map
\[ u^t\co (\D,\partial\D)\lra (\wtW,S^t\setminus\{q^t_{\pm}\}), \]
i.e.\ a solution of the Cauchy--Riemann equation
\[ \partial_xu+J(u)\partial_yu=0,\]
satisfying the following conditions:
\begin{itemize}
\item[(D1)] $[u^t]=A^t\in\pi_2(\wtW,S^t\setminus\{ q^t_{\pm}\})$.
\item[(D2)] $u^t(i^k)\in\ell^t_k$, $k=0,1,2$.
\end{itemize}
The collection
\[ \WW:= \bigl\{ u^t\co t\in (-1,1),\;
u^t\;\text{is a}\; t \text{-level Bishop disc} \bigr\}\]
of all such discs is the {\bf moduli space of Bishop discs}.
\end{defn}

For $\delta\in (0,1)$ we define a neighbourhood of the unknot
$K\subset S^3$ by
\[ \UU^{\delta}:=K\cup\bigl\{ u^t_s(z)\co z\in\partial\D,\, 1-\delta<s^2+t^2
<1\bigr\}\subset S^3.\]
We choose $\delta$ so small that the holomorphic discs
$D^t_s=u^t_s(\D)$ with boundary in $\overline{\UU}^{\delta}$
(i.e.\ with $1-\delta\leq s^2+t^2\leq 1$) lie entirely in
the neighbourhood of $S^3$ in $D^4$ that has been identified
holomorphically with a neighbourhood of $S^3$ in~$\wtW$.
This allows us to regard those $u^t_s$ as holomorphic discs
in~$\wtW$. Then, according to \cite[Corollary~4.9]{geze10}, any $t$-level
Bishop disc whose boundary meets the set $\overline{\UU}^{\delta}$
is one of the standard Bishop discs $u^t_s$. As in our previous paper,
we can therefore introduce the following subset of the moduli space~$\WW$.

\begin{defn}
The {\bf truncated moduli space} is
\begin{eqnarray*}
\lefteqn{\WW^{\delta}:= \bigl\{ u^t \co
t\in [-\sqrt{1-\delta},\sqrt{1-\delta}],\bigr. }\\
& &  \bigl. u^t\;
\text{is a}\; t\text{-level Bishop disc} \;
\text{such that} \; u^t(\partial\D)\subset S^3\setminus
\UU^{\delta} \bigr\} .
\end{eqnarray*}
\end{defn}

The following statements from \cite[Section~4]{geze10}, where such Bishop
discs were studied for $W=D^4$, carry over to the present setting.

\begin{prop}
\label{prop:Bishop}
(a) Every Bishop disc has Maslov index~$2$, i.e.\
$\mu(A^t)=2$ for all $t\in (-1,1)$.

(b) All Bishop discs are embedded and mutually disjoint.\qed
\end{prop}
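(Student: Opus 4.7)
The plan is to reduce both parts to the corresponding statements in~\cite{geze10} for $W=D^4$. The essential point is that conditions~(C5) and~(J1) provide a biholomorphic identification of a neighbourhood of $S^3\subset\wtW$ with a neighbourhood of $\partial D^4$ in $(D^4,J_{\st})$, and that the relative homotopy classes $A^t$, the totally real spheres $S^t\setminus\{q^t_\pm\}$, the characteristic leaves $\ell^t_k$, and the standard Bishop discs $u^t_s$ for $s$ close to $\pm\sqrt{1-t^2}$ are all defined inside this identified neighbourhood. Everything that is either homotopy-invariant or expressible in terms of the reference discs $u^t_s$ is therefore insensitive to the difference between $\wtW$ and $D^4$.

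For part~(a), the Maslov index depends only on the relative homotopy class together with the totally real boundary subbundle. Since $A^t$ is represented by a standard Bishop disc $u^t_s$ lying entirely in the identified neighbourhood, and since the totally real boundary condition $T(S^t\setminus\{q^t_\pm\})$ is prescribed in the same neighbourhood, the winding-number computation giving $\mu(A^t)=2$ in~\cite{geze10} applies without change.

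For part~(b), I would combine positivity of intersections for $J$-holomorphic curves in a $4$-manifold with the relative adjunction inequality for holomorphic discs with totally real boundary. Homological intersection numbers of the classes $A^t$ and their self-intersections can be computed via the standard Bishop discs, which are manifestly embedded and mutually disjoint; the resulting numbers are those already obtained in~\cite{geze10}. For two Bishop discs at distinct levels $t\neq t'$ the boundaries lie in the disjoint sphere leaves $S^t$ and $S^{t'}$, so no boundary intersection can occur, and positivity of interior intersections together with the vanishing of $A^t\cdot A^{t'}$ forces disjointness. For discs at the same level one uses the boundary version of positivity of intersections, exploiting that each disc meets the three leaves $\ell^t_k$ transversely at one point by~(D2), together with the adjunction inequality corresponding to $\mu(A^t)=2$ to preclude self-intersections.

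The main obstacle in this kind of argument is the boundary behaviour: interior positivity of intersections is classical, whereas for holomorphic discs with a totally real boundary condition one needs its more delicate boundary analogue, and extra care is required near the poles $q^t_\pm$ where the totally real condition on $S^t$ degenerates. Since precisely this analysis has been carried out in~\cite{geze10} and, by our localisation observation, nothing in the boundary geometry is altered when one passes from $D^4$ to $\wtW$, the arguments transfer verbatim.
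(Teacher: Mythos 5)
Your proposal is correct and is essentially the same as the paper's: the paper states Proposition~\ref{prop:Bishop} with a direct reference to \cite[Section~4]{geze10}, noting only that the arguments ``carry over to the present setting,'' and the substance of why they carry over is exactly the localisation observation you make---the relative homotopy classes $A^t$, the totally real boundary spheres $S^t\setminus\{q^t_{\pm}\}$, the reference discs $u^t_s$, and the almost complex structure near $S^3$ are all unchanged by conditions~(C5) and~(J1), so the Maslov-index computation and the positivity-of-intersections/adjunction argument from~\cite{geze10} apply verbatim. Your account of part~(b) correctly identifies the key inputs (positivity of interior and boundary intersections, $A^t\bullet A^t=0$ computed from the standard discs, disjointness of leaves for distinct levels); one could spell out slightly more explicitly that for two distinct discs at the same level both disjointness and embeddedness follow from $A^t\bullet A^t=0$, but the ingredients you list are exactly those used in~\cite{geze10}.
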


The same arguments as in \cite{geze10} apply to prove transversality,
i.e.\ that the moduli space $\WW$ is a manifold;
but see the appendix at the end of this paper.

In Section~\ref{section:compact} we shall establish
compactness for the truncated moduli space~$\WW^{\delta}$
under the assumption $\inf(\alpha_-)>\pi$ or, in the exact case,
$\inf_0(\alpha_-)>\pi$. Thus, provided there are no short
Reeb orbits for~$\alpha_-$, the truncated moduli space
is a compact manifold with boundary. Then the proof of
\cite[Proposition~5.1]{geze10} goes through unchanged; this result says
the following.

\begin{prop}
\label{prop:evaluation}
Let $(W,\omega)$ be a symplectic cobordism as in the ball
theorems.
If\/ $\inf(\alpha_-)>\pi$, or  $\inf_0(\alpha_-)>\pi$ in the exact case,
the evaluation map $\ev_1\co u\mapsto u(1)$ defines a
diffeomorphism
\[ \ev_1\co\WW^{\delta}\lra \{ u^t_s(1)\co s^2+t^2\leq 1-\delta\}
=:Q^{\delta}\]
between $\WW^{\delta}$ and the closed $2$-disc $Q^{\delta}\subset S^3$. \qed
\end{prop}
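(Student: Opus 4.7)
The plan is to mirror the proof of \cite[Proposition~5.1]{geze10}, exploiting the compactness result from Section~\ref{section:compact} together with the transversality statement that makes $\WW$ a manifold. First I would assemble the global picture: under the period hypothesis $\inf(\alpha_-)>\pi$ (or $\inf_0(\alpha_-)>\pi$ in the exact case), the truncated moduli space $\WW^{\delta}$ is a compact smooth $2$-manifold with boundary. The dimension count comes from Riemann--Roch with totally real boundary: the relative homology class $A^t$ has Maslov index $2$ by Proposition~\ref{prop:Bishop}(a), so the unparametrised moduli of discs at level $t$ with three prescribed boundary marked points lying on the one-dimensional leaves $\ell^t_k$ is $1$-dimensional, and allowing $t$ to vary adds one more dimension. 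Compactness of $\WW^{\delta}$ is precisely the content of Section~\ref{section:compact}, and identifies the manifold boundary of $\WW^{\delta}$ with the one-parameter family of standard discs $u^t_s$ having $s^2+t^2=1-\delta$ via \cite[Corollary~4.9]{geze10}.

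Next I would verify that $\ev_1$ behaves well on $\partial\WW^{\delta}$. On this boundary the map sends $u^t_s\mapsto u^t_s(1)$, which by the explicit parametrisation
\[
u^t_s(1)=\bigl(\sqrt{1-s^2-t^2}\,\rme^{i\theta(s,t)},s,t\bigr)
\]
is a diffeomorphism onto $\partial Q^{\delta}=\{u^t_s(1):s^2+t^2=1-\delta\}$. Then I would show that $\ev_1$ is a local diffeomorphism on the whole of $\WW^{\delta}$. Since both manifolds are $2$-dimensional, this reduces to injectivity of $d\ev_1$ at every disc, which follows from the transversality of the boundary $u^t(\partial\D)$ to the characteristic foliation on $S^t$ at the point~$u^t(1)$: a tangent vector to $\WW^{\delta}$ is a section of the normal bundle to the totally real boundary satisfying a Fredholm boundary value problem, and such a section that vanishes at the single point $1\in\partial\D$ (together with the constraint that it respect the marked point conditions at $i^k$) must vanish identically by the automatic transversality available for Maslov-index-$2$ discs in a $4$-manifold.

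Injectivity of $\ev_1$ is immediate from Proposition~\ref{prop:Bishop}(b): distinct Bishop discs in $\WW$ are mutually disjoint, so $u_1(1)=u_2(1)$ forces $u_1=u_2$. To conclude, I would run a standard open--closed argument. Since $\ev_1$ is a local diffeomorphism its image is open in $Q^{\delta}$; by compactness of $\WW^{\delta}$ the image is closed; since $Q^{\delta}$ is connected and the image contains $\partial Q^{\delta}$, the map is surjective. Combined with injectivity this proves $\ev_1\co\WW^{\delta}\ra Q^{\delta}$ is a diffeomorphism.

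The main technical obstacle here is the local-diffeomorphism step: one must rule out elements of $\ker d\ev_1$ using the careful Fredholm/transversality analysis of \cite{geze10}, including the fact that the linearised Cauchy--Riemann operator with the chosen totally real boundary condition and three marked point constraints is surjective with one-dimensional kernel at each level~$t$, and that the $(s,t)$-derivative of the standard family provides an explicit basis for the tangent space of $\WW^{\delta}$ near the boundary. Once this is in place, everything else assembles cleanly from the compactness result and the embeddedness/disjointness statement.
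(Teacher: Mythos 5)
Your reconstruction matches the paper's approach. The paper proves the proposition by establishing compactness of $\WW^{\delta}$ in Section~\ref{section:compact} and then invoking the proof of \cite[Proposition~5.1]{geze10} verbatim; the structure of that proof is exactly what you outline, namely a compact $2$-manifold with boundary from transversality and compactness, boundary matching via the standard family and \cite[Corollary~4.9]{geze10}, local diffeomorphism from automatic transversality for Maslov-index-$2$ discs, injectivity from the mutual disjointness in Proposition~\ref{prop:Bishop}(b), and surjectivity by the open--closed argument.
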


It is now a simple matter to prove Theorems~\ref{thm:ball}
and~\ref{thm:ball-exact}.

\begin{proof}[Proof of the ball theorems]
Under the assumption $\inf(\alpha_-)>\pi$ or $\inf_0(\alpha_-)>\pi$,
respectively, and hence with $\WW^{\delta}$ being established as a closed
disc by the preceding proposition, one can define
an embedding
\[ F\co \bigl(\D\times \Int\D,\partial\D\times \Int\D\bigr) \lra
(\wtW\setminus K,S^3\setminus K)\]
as in \cite[Section~5]{geze10} by setting
\[ F(z,s,t)=
\begin{cases}
\Bigl(\ev_1^{-1}\bigl(u^t_s(1)\bigr)\Bigr)(z) & 
               \quad\text{on}\quad \D\times\{s^2+t^2\leq 1-\delta\},\\[2mm]
u^t_s(z)                                      &
               \quad\text{on}\quad \D\times\{1-\delta\leq s^2+t^2<1\}.
\end{cases}
\]
We also have the standard embedding
\[ F_{\st}\co \bigl(\D\times \Int\D,\partial\D\times \Int\D\bigr) \lra
(D^4\setminus K,S^3\setminus K)\]
given by $F_{\st}(z,s,t)=u^t_s(z)$. Both $F$ and $F_{\st}$ are
holomorphic fillings of $S^3$ in the sense of~\cite[Definition~5.2]{geze10},
and they obviously coincide for $1-\delta\leq s^2+t^2<1$.
It follows that the map $F\circ F_{\st}^{-1}$, {\em a priori\/} defined
on $D^4\setminus K$, equals the identity in a neighbourhood
of $S^3\subset D^4$, and hence extends in the obvious way
to an embedding
\[ (D^4,S^3)\lra (\wtW,S^3),\]
which by the compactness of $D^4$ must be a diffeomorphism.
\end{proof}
\section{The Hofer energy}
\label{section:hofer}
The compactness proof for the truncated moduli space $\WW^{\delta}$
is based on energy estimates for holomorphic discs and spheres
in the almost complex manifold $(\wtW, J)$. The following notion
of energy is essentially the one introduced in \cite[Section~3.2]{hofe93}.

\begin{defn}
Let $\Sigma$ be a Riemann surface (potentially non-compact or with
boundary) and $u\co\Sigma\ra\wtW$ a $J$-holomorphic curve.
The {\bf Hofer energy} of $u$ is
\[ E(u):=\sup_{\tau\in\TT}\int_{\Sigma}u^*\omega_{\tau}.\] 
\end{defn}

The Hofer energy of the holomorphic discs $u^t_s$ in the standard
holomorphic filling of $S^3\subset\C^2$ is uniformly bounded,
cf.~\cite[Section~2.4]{geze10}. We now
prove a sharp estimate for the energy of Bishop discs in~$\wtW$.

\begin{prop}
\label{prop:energy-bound}
The Hofer energy of the Bishop discs in $\wtW$ is uniformly bounded
by~$\pi$, i.e.\ $E(u)\leq\pi$ for all $u\in\WW$.
\end{prop}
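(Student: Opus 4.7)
The plan is to reduce $\int_{\D} u^*\omega_\tau$ to an integral of $\alst$ over $\partial u \subset S^t$ by two applications of Stokes' theorem, and then to bound that boundary integral geometrically via the $z_1$-plane projection.

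First, I would fix a reference standard Bishop disc $v := u^t_{s_0}$ with $|s_0|$ sufficiently close to $\sqrt{1-t^2}$, so that $v$ maps into the neighbourhood of $S^3 \subset \wtW$ on which condition (C5) identifies $\omega_\tau$ with $\omega_{\st} = \rmd\lamst$. Since $[u] = [v] = A^t$ in $\pi_2(\wtW, S^t \setminus \{q^t_\pm\})$, there is a smooth homotopy $H \co \D \times [0,1] \to \wtW$ from $u$ to $v$ with $H(\partial\D \times [0,1]) \subset S^t \setminus \{q^t_\pm\}$. Applying Stokes to the closed $2$-form $H^*\omega_\tau$ on $\D \times [0,1]$ yields
\[ \int_\D u^*\omega_\tau \;=\; \int_\D v^*\omega_\tau \,+\, \int_A \omega_\tau, \]
where $A := H(\partial\D \times [0,1])$ with the orientation induced from the product.

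Both $v$ and $A$ lie in the region where $\omega_\tau = \rmd\lamst$: the disc $v$ by the choice of $s_0$, and the annulus $A$ because $A \subset S^t \subset S^3$. A second Stokes application, together with $\lamst|_{TS^3} = \alst$, then converts both summands on the right into boundary integrals, and an orientation check (the annulus $A$ is the ``side'' of the solid cylinder $\D \times [0,1]$) shows that the $\partial v$ contributions cancel, leaving the $\tau$-independent identity
\[ \int_\D u^*\omega_\tau \;=\; \int_{\partial u} \alst. \]

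To bound the right-hand side, I would use that $\partial u \subset S^t \subset \{y_2 = t,\; x_1^2+y_1^2+x_2^2 = 1-t^2\}$, so that the $x_2\,\rmd y_2 - y_2\,\rmd x_2$ contribution to $\alst$ integrates to zero around the closed loop. Hence $\int_{\partial u} \alst = \tfrac{1}{2}\int_{\partial u}(x_1\,\rmd y_1 - y_1\,\rmd x_1)$ equals the signed area enclosed by the projection of $\partial u$ into the $z_1$-plane. The class $A^t$ forces $\partial u$ to wind once around the cylinder $S^t \setminus \{q^t_\pm\}$, so its projection winds once around $0$ inside the disc $\{x_1^2+y_1^2 \leq 1-t^2\}$; writing the signed area in polar coordinates as $\tfrac{1}{2}\int_0^{2\pi} r(\phi)^2\,\rmd\phi$ then bounds it by $\pi(1-t^2) \leq \pi$. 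Combined with the pointwise positivity $u^*\omega_\tau \geq 0$ coming from $\omega_\tau$-compatibility of $J$, this gives $E(u) \leq \pi$.

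The main obstacle is the lack of a global primitive of $\omega$ on $\wtW$: this forces the indirect bootstrap through the reference disc $v$ (located in the $D^4$-neighbourhood of $S^3$) and the homotopy $H$, rather than a single direct application of Stokes. Once the reduction to $\int_{\partial u}\alst$ is in place, the remaining estimate is elementary, with the only technical delicacy being the orientation bookkeeping in the two Stokes applications.
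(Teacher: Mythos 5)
Your reduction of $\int_\D u^*\omega_\tau$ to the boundary integral $\int_{\partial u}\alst$, via the reference disc $v$ and two applications of Stokes, is valid and is essentially equivalent to the paper's reduction $\int_\D u^*\omega_\tau=\int_{D^t}\omega_{\st}$, where $D^t\subset S^t$ is the $2$-disc with oriented boundary $u(\partial\D)$ and $q^t_+\in D^t$: your quantity is $\int_{\partial D^t}\alst$, which equals $\int_{D^t}\omega_{\st}$ by one further application of Stokes on the disc $D^t\subset S^3$. So up to that point the two arguments differ only in presentation.

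The gap is in the final estimate. You invoke the polar-coordinate formula $\frac{1}{2}\int_0^{2\pi} r(\phi)^2\,\rmd\phi$ for the signed area enclosed by the $z_1$-projection of $\partial u$, but this formula presupposes that the projected curve is a radial graph over the origin, i.e.\ that each ray from $0$ meets it exactly once. Nothing forces this: $u(\partial\D)$ is an embedded circle in the $2$-sphere $S^t$ by Proposition~\ref{prop:Bishop}, but its projection to the $z_1$-plane can self-intersect and need not be star-shaped. Moreover, the bound genuinely fails under the hypotheses you state -- containment in the disc of radius $\sqrt{1-t^2}$ and winding number $1$ around the origin: a closed curve in $B_R$ with winding number $1$ around $0$ can enclose arbitrarily large signed area, for instance by appending many small positively oriented loops elsewhere in $B_R$. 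What rescues the estimate is the fact that $\partial u$ bounds the disc $D^t$ inside the $2$-sphere $S^t$. This forces the winding-number function of the projected curve to take only the values $-1$, $0$, $1$, because the projection $S^t\to\{|z_1|\le\sqrt{1-t^2}\}$ is two-to-one, orientation-preserving on $\{x_2>0\}$ and orientation-reversing on $\{x_2<0\}$, and $D^t$ contains at most one preimage of each kind over any given point. That is exactly the projection argument the paper runs on $\int_{D^t}\omega_{\st}$, and it gives the bound $\pi(1-t^2)$. The estimate you want is therefore correct, but your derivation needs this structural input about $D^t$; the winding-number-plus-radius data alone does not suffice.
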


\begin{proof}
Let $u=u^t$ be a $t$-level Bishop disc. Choose a
function $\tau\in\TT$. We want to estimate $\int_{\D}u^*\omega_{\tau}$.

By Proposition~\ref{prop:Bishop} the Bishop disc $u$ is an embedding, hence
\[ \int_{\D}u^*\omega_{\tau}=\int_{u(\D)}\omega_{\tau}.\]

The boundary $u(\partial\D)$ of the Bishop disc is
contained in $S^t\setminus\{q^t_{\pm}\}\subset S^3$.
The $2$-sphere $S^t$ is naturally oriented as the unit sphere
in $x_1y_1x_2$-space.
Let $D^t$ be the $2$-disc in $S^t$ (with the induced orientation)
whose oriented boundary equals $u(\partial\D)$;
the disc $D^t$ is characterised by the condition $q^t_+\in D^t$.
The $2$-discs $u(\D)$ and $D^t$ both represent the relative homotopy class
$A^t\in\pi_2(\wtW,S^t\setminus\{q^t_{\pm}\})$, and they
coincide along the boundary. Since $\omega_{\tau}$ is exact near
$S^t$ (and closed on all of~$\wtW$) it follows that
\[ \int_{u(\D)}\omega_{\tau}=\int_{D^t}\omega_{\tau}
=\int_{D^t}\omega_{\st}.\]

On $TS^t$ we have $\omega_{\st}=\rmd x_1\wedge\rmd y_1$. 
So the integral of $\omega_{\st}$ over a subset of $S^t$ measures
the area of the projection of that subset to the $x_1y_1$-plane,
where the regions in the upper hemisphere $\{ x_2\geq 0\}$ are
counted positively; those in the lower hemisphere, negatively.
It follows that $\int_{D^t}\omega_{\st}$, and hence
$\int_{\D}u^*\omega_{\tau}$, is bounded above by
$\pi(1-t^2)$.
\end{proof}
\section{Compactness}
\label{section:compact}
We now want to show that the truncated moduli space $\WW^{\delta}$
is compact. The basic set-up is similar to \cite[Section~6]{geze10}.
We equip $\WW^{\delta}$ with the topology induced by the
$W^{1,p}$-norm, $p>2$, on maps $\D\rightarrow\wtW$;
in \cite[Section~6]{geze10} the range was $D^4\subset\C^2$.

\begin{prop}
\label{prop:compact}
If\/ $\inf (\alpha_-)>\pi$, or $\inf_0(\alpha_-)>\pi$ in the exact case,
the truncated moduli space $\WW^{\delta}$ is compact.
\end{prop}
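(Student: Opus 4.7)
The plan is to extract a convergent subsequence from an arbitrary sequence $(u_n)\subset\WW^{\delta}$ by combining the a priori Hofer-energy bound of Proposition~\ref{prop:energy-bound} with classical bubbling-off analysis and SFT compactness. First I would pass to a subsequence so that the levels $t_n$ converge to some $t_{\infty}\in[-\sqrt{1-\delta},\sqrt{1-\delta}]$. Each disc $u_n$ has Hofer energy at most $\pi$, its boundary lies in the compact set $S^3\setminus\UU^{\delta}$, and condition~(J3) together with the plurisubharmonic function furnished by Remark~\ref{rems:J}(1) provides a maximum principle keeping the images uniformly away from~$M_+$. In the uniform-gradient regime, elliptic estimates and standard boundary regularity for the varying totally real condition $u_n(\partial\D)\subset S^{t_n}$ give $C^{\infty}$-convergence of a subsequence to some $u_{\infty}$; the conditions (D1)--(D2) and $u_{\infty}(\partial\D)\subset S^3\setminus\UU^{\delta}$ pass to the limit since $S^3\setminus\UU^{\delta}$ is closed, whence $u_{\infty}\in\WW^{\delta}$.

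If instead $\|\nabla u_n\|_{\infty}\to\infty$, I would rescale \`a la Hofer at a concentration point. This produces one of three types of non-constant bubble: (i) an interior $J$-holomorphic sphere $v\co S^2\to\wtW$; (ii) a boundary disc $v\co(\D,\partial\D)\to(\wtW,S^{t_{\infty}})$; or (iii) a finite-energy plane $v\co\C\to\R\times M_-$ in the cylindrical end, asymptotic to a closed Reeb orbit of $\alpha_-$. Each must be ruled out using the energy bound~$\pi$. In case~(iii) the period of the asymptotic orbit equals the Hofer energy of $v$ and is hence at most $\pi$, contradicting $\inf(\alpha_-)>\pi$. In the exact setting one first verifies that the asymptotic orbit is contractible in $M_-$: it appears as the outer-boundary limit of cylindrical pieces of the $u_n$, which bound discs in $\wtW$, and exactness together with $\lambda|_{TM_-}=\alpha_-$ and Stokes' theorem promote this to a null-homotopy inside~$M_-$, contradicting $\inf_0(\alpha_-)>\pi$. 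Case~(i) is excluded by combining regularity of $J$ for spheres (condition~(J4)), the inequality $c_1(A)\geq 1$ for any non-constant simple sphere class (Remark~\ref{rems:J}(3)), minimality~(C1), and the refined SFT compactness of~\cite{behwz03}, which is needed to organise interior sphere bubbling in the presence of the cylindrical end.

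The hard part will be case~(ii), boundary disc bubbling, which is the genuinely new aspect of the proof compared to~\cite{geze10}. Here I would exploit that the total Maslov index of the bubble tree over the $t_{\infty}$-level is preserved and equal to $\mu(A^{t_{\infty}})=2$ by Proposition~\ref{prop:Bishop}(a). Combined with positivity of the Maslov index for non-constant $J$-holomorphic discs with boundary on the totally real $2$-sphere $S^{t_{\infty}}$, this forces any non-trivial boundary disc bubble to absorb the entire Maslov budget, leaving the principal component with non-positive Maslov index and hence constant; but the three-point constraint~(D2), together with the fact that the standard Bishop discs are embedded and mutually disjoint (Proposition~\ref{prop:Bishop}(b)), precludes a constant principal component. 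Once all three scenarios are ruled out, $(u_n)$ admits a $W^{1,p}$-convergent subsequence whose limit lies in $\WW^{\delta}$, which is the desired compactness.
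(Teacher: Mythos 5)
Your overall skeleton — uniform gradient bound plus Hofer rescaling, energy bound $\pi$ from Proposition~\ref{prop:energy-bound}, sphere bubbles via SFT compactness and intersection theory, plane/breaking bubbles bounded by the period estimate — matches the paper's strategy. But there are three substantive gaps.

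First, your argument for the exact case of~(iii) conflates two different situations that the paper keeps carefully apart. When a finite-energy plane bubbles off, it may either live entirely in the symplectisation $\R\times M_-$ (so that projecting to the $M_-$-factor gives a genuine null-homotopy of the asymptotic orbit, and $\inf_0(\alpha_-)>\pi$ applies directly), or it may be a plane $w\co\C\to\wtW$ touching $W$ with image escaping to $-\infty$ in the cylindrical end. In the second situation there is \emph{no} null-homotopy inside $M_-$, and your claim that ``exactness together with $\lambda|_{TM_-}=\alpha_-$ and Stokes' theorem promote this to a null-homotopy inside $M_-$'' does not make sense: Stokes gives an energy identity, not homotopy information. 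The correct argument is different — one globalises a primitive $\lambda_\mu$ of $\omega_{\tau_\mu}$, applies Stokes to get $\int_\C w^*(\rmd\lambda_\mu)=0$, and concludes that the asymptotic period $T$ vanishes, which is the contradiction. Without this you have not ruled out all plane-bubbles in the exact case.

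Second, in case~(ii) you invoke ``positivity of the Maslov index for non-constant $J$-holomorphic discs with boundary on the totally real $2$-sphere $S^{t_\infty}$'' as if it were automatic. It is not: the Maslov index of a disc with boundary on a totally real surface can in general be any integer, including zero or negative. In the present geometry one can indeed show that a non-constant boundary bubble has positive Maslov index (essentially by a winding-number/positivity-of-intersections argument against the standard Bishop disc foliation, which is what the reference back to \cite{geze10} provides), but that is a nontrivial geometric input and cannot just be asserted. Relatedly, your Maslov-budget accounting for the bubble tree silently assumes no sphere bubbles are attached at that stage, whereas each sphere contributes $2c_1$; the paper avoids this by ruling out sphere components by a separate self-intersection/adjunction argument.

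Third, the SFT compactness theorem of \cite{behwz03} that you invoke for interior sphere bubbling requires the contact form $\alpha_-$ on the negative end to be non-degenerate. The paper addresses this with a perturbation argument (choosing $f^{(\mu)}\to 1$ with $f^{(\mu)}\alpha_-$ non-degenerate and passing to the limit); your sketch omits this entirely, which is a real hole since nothing in the hypotheses of the proposition rules out a degenerate $\alpha_-$.

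Finally, two smaller inaccuracies: the period of the asymptotic orbit is bounded \emph{above} by the Hofer energy, it is not equal to it; and the removal-of-singularities step for the boundary disc bubble is genuinely delicate here because $\wtW$ is non-compact and requires constructing a taming symplectic form near the totally real $S^{t_0}$ — your sketch treats it as routine.
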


\begin{proof}
Let $(u_{\nu})$ be a sequence in $\WW^{\delta}$, where
$u_{\nu}$ is of level~$t_{\nu}$.
After passing to a subsequence we may assume that $t_{\nu}\ra t_0\in
[-\sqrt{1-\delta},\sqrt{1-\delta}]$.
As in \cite{geze10} we want to apply \cite[Theorem~B.4.2]{mcsa04} in order
to prove compactness, i.e.\ to find a converging subsequence of $(u_{\nu})$
with respect to the $W^{1,p}$-norm. This requires the following:
\begin{itemize}
\item[(i)] There is a uniform $L^p$-bound for
the sequence $(|\nabla u_{\nu}|)$, where the norm is taken with respect to
some {\em complete\/} metric on~$\wtW$. We claim that the sequence
$(|\nabla u_{\nu}|)$ is uniformly bounded even in the supremum norm
on the closed disc~$\D$. This part of the argument is to some extent
parallel to~\cite{geze10}. In some places we need to invoke additional
energy estimates to compensate for the lack of compactness of the range
of our holomorphic discs. Extra care needs to be taken with
potential bubbling at interior points, because we no longer have
a global maximum principle on~$\wtW$ that would preclude spheres.
\item[(ii)] The image $u_{\nu}(\D)$ stays inside a fixed compact subset of
$\wtW$ for all~$\nu$.
\end{itemize}

First we notice that (ii) is a straightforward consequence of the
bounds we establish in~(i). Indeed, with the help of the
mean value theorem the uniform $C^0$-bound on
the image $u_{\nu}(\D)$ follows from the uniform
bound on the supremum norm of $|\nabla u_{\nu}|$, together
with the fact that $u_{\nu}(\partial\D)$ stays
in the compact subset $S^3\subset\partial\wtW$ of~$\wtW$
(and the assumption that the metric be complete).

Arguing by contradiction, assume that there is no uniform bound
(in $\nu\in\N$) on $\max_{z\in\D}|\nabla u_{\nu}(z)|$.
We can then find a sequence of points
$z_{\nu}\ra z_0$ in $\D$ such that $|\nabla u_{\nu}(z_{\nu})| \ra\infty$.
We distinguish the cases $z_0\in\partial\D$ and $z_0\in\Int\D$.

\vspace{1mm}

{\bf Case 1:} $z_0\in\partial\D$.
Choose a conformal map from $\Hp\cup\{\infty\}$ to $\D$ that sends
$0$ to $z_0$ and $\infty$ to~$-z_0$.
Subject to this conformal identification, we regard the $u_{\nu}$
as maps
\[ u_{\nu}\co (\Hp ,\R )\lra (\wtW,S^{t_{\nu}}\setminus
\{ q^{t_{\nu}}_{\pm}\} ),\]
and the sequence $(z_{\nu})$ as a sequence in $\Hp$ converging to~$0$,
still satisfying
\[ R_{\nu}:=|\nabla u_{\nu} (z_{\nu})| \ra\infty.\]
As shown in the proof of \cite[Proposition~6.1]{geze10}
(by an argument going back to Hofer),
after passing to a subsequence of $(u_{\nu})$ one can find a
sequence $\varepsilon_{\nu}\searrow 0$ such that
\begin{itemize}
\item $\varepsilon_{\nu}R_{\nu}\ra\infty$,
\item $|\nabla u_{\nu}(z)| \leq 2R_{\nu}\;\;\text{for all}\;\;
z\in\Hp \;\;\text{with}\;\; |z-z_{\nu}|\leq\varepsilon_{\nu}$,
\item $R_{\nu}y_{\nu}\ra r\;\;
\text{for some}\;\; r\in [0,\infty]$, where $z_{\nu}=x_{\nu}+iy_{\nu}$.
\end{itemize}

\vspace{1mm}

{\bf Case 1.1:} $r<\infty$.
Here the argument is largely analogous to that in~\cite{geze10}.
One considers the rescaled sequence $(w_{\nu})$ on~$\Hp$, defined by
\[ w_{\nu}(z):= u_{\nu}(x_{\nu}+z/R_{\nu}),\;\; z\in\Hp.\]
The only issue to take care of is the non-compactness
of the range $\wtW$ of the $w_{\nu}$. In fact,
this does not cause any problems,
since there is a uniform bound on the gradient of the $w_{\nu}$,
and the $w_{\nu}$ send $\R=\partial\Hp$ to the compact subset
$S^3\subset\partial\wtW$. So there is a $C^0_{\loc}$-bound
on the $w_{\nu}$, which allows us to apply
\cite[Theorem~B.4.2]{mcsa04} as in~\cite{geze10}.
As there we then find a subsequence of $(w_{\nu})$
(after a modification replacing the varying boundary
condition by a varying almost complex structure)
that converges in $C^{\infty}_{\loc}$ to a non-constant $J$-holomorphic map
\[ w\co (\Hp,\R)\lra (\wtW,S^{t_0}\setminus\{q^{t_0}_{\pm}\}).\]

We now need to show that the singularity of $w$ at $\infty$ can
be removed, i.e.\ that $w$ extends to an honest holomorphic disc
\[ (\D ,\partial\D)\lra (\wtW,
S^{t_0}\setminus\{q^{t_0}_{\pm}\}).\]
Such a disc would have contradictory properties as in~\cite{geze10}.

For this removal of singularities, it is again the non-compactness of $\wtW$
that forces us to take extra care.
Write $|\, .\, |_{\tau}$ for the norm induced by the (incomplete) metric
$g_{\tau}:=\omega_{\tau}(\, .\, ,J\, .\, )$. The {\bf Dirichlet energy}
of $w$ is defined by
\[ \frac{1}{2}\int_{\Hp}|\nabla w|_{\tau}^2\,\mathrm{dvol}_{\Hp}=
\frac{1}{2}\int_{\Hp}\bigl( |\partial_sw|_{\tau}^2+
|\partial_tw|_{\tau}^2\bigr)\,\rmd s\wedge\rmd t,\]
where $z=s+it$. Since $w$ is holomorphic for the $\omega_{\tau}$-compatible
almost complex structure~$J$, the Dirichlet energy of $w$
equals its symplectic energy $\int_{\Hp}w^*\omega_{\tau}$,
see \cite[p.~21]{mcsa04}.
These energies are invariant under conformal
reparametrisations, so Proposition~\ref{prop:energy-bound}
and the $C^{\infty}_{\loc}$-convergence of the sequence $(w_{\nu})$
yield the estimate
\[ \frac{1}{2}\int_{\Hp}|\nabla w|_{\tau}^2\,
\mathrm{dvol}_{\Hp}\leq\pi.\]

Finiteness of the Dirichlet energy is one of the
conditions in the theorem on removal of
singularities~\cite[Theorem~4.1.2]{mcsa04}. In addition, that theorem
requires the image of $w$ to lie in a compact manifold and
$w(\partial\Hp)$ to lie in a Lagrangian submanifold with respect to
a symplectic form taming~$J$.

We first address the latter point. Choose a Riemannian metric $g$ 
on $\wtW$ such that $J$ is orthogonal with respect to $g$,
and such that $J$ maps each tangent space of the totally
real submanifold $L:=S^{t_0}\setminus\overline{\UU}^{\delta/2}$ to its
$g$-orthogonal complement; this is possible by a lemma
of Frauenfelder, see~\cite[Lemma~4.3.3]{mcsa04}.
Define a non-degenerate $2$-form $\sigma$ on $\wtW$ by
$\sigma:=g(J\, .\, ,\, .\, )$. Then the pull-back of $\sigma$
to $L$ vanishes identically, and $J$ is $\sigma$-compatible.

The usual proof of Weinstein's Lagrangian neighbourhood
theorem, see \cite[Theorem~3.33]{mcsa98}, allows one to
find a diffeomorphism $\phi$ from a neighbourhood of $L$ in $\wtW$
to a neighbourhood of the zero section in the cotangent bundle $T^*L$
such that the pull-back $\omega_L:=\phi^*(\rmd\bfp\wedge\rmd\bfq)$ of the
canonical symplectic form on $T^*L$ coincides with $\sigma$ on
$T\wtW|_{L}$. In particular, $J$ is tamed by $\omega_L$
on $T\wtW|_{L}$, and hence in a neighbourhood of $L$.
So $\omega_L$ is the desired symplectic form.

It remains to show that points in $\Hp$ sufficiently close
to $\infty$ are mapped by $w$ into that neighbourhood of~$L$.
By precomposing with the conformal equivalence
\[ \C\supset\R\times [0,\pi]\lra\Hp\setminus\{ 0\},\;\;\;
s+it\mapsto\rme^{s+it},\]
we may regard $w|_{\Hp\setminus\{ 0\}}$ as a $J$-holomorphic
map defined on $\R\times [0,\pi]\subset\C$. In this
parametrisation, neighbourhoods of the singular point are
of the form $\{ s>R\}$.

For $s\in\R$ set
\[ \gamma_s(t):=w(s+it),\;\; t\in[0,\pi].\]
The length $l(s)$ of $\gamma_s$ with respect to the metric $g_{\tau}$ is
\[ l(s)=\int_0^{\pi}|\dot{\gamma}_s|_{\tau}\,\rmd t.\]

Since $w$ maps $\R\times\{0\}$ and $\R\times\{\pi\}$ to
the compact set $S^{t_0}\setminus\UU^{\delta}\subset L$, the fact that $w$
maps a neighbourhood of the singularity to a neighbourhood of $L$
is a consequence of the following lemma.

\begin{lem}
For $\tau=\exp$ we have $\lim_{s\ra\infty}l(s)=0$.
\end{lem}

\begin{proof}
Since $J$ is $\omega_{\tau}$-compatible and $w$ is $J$-holomorphic, we have
\[ |\dot{\gamma}_s|_{\tau}=|\partial_tw|_{\tau}=|\partial_sw|_{\tau},\]
hence $|\dot{\gamma}_s|_{\tau}^2=|\nabla w|_{\tau}^2/2$.
The choice $\tau=\exp$ and condition (J2) give us a curvature bound
on the corresponding metric. This allows us to apply the mean value
inequality \cite[Lemma~4.3.1]{mcsa04}, cf.\ the computations on
page 84 of~\cite{mcsa04}. For $s$ large, the assumptions of that
lemma are satisfied, so there is a constant $C$ depending only on
the geometry of the manifold such that
\begin{eqnarray*}
|\dot{\gamma}_s(t)|_{\tau}^2 & = & \frac{1}{2}|\nabla w(s+it)|_{\tau}^2\\
  & \leq & C\int_{B_1(s+it)\cap(\R\times[0,\pi])} |\nabla w|_{\tau}^2\\
  & \leq & C\int_{[s-1,\infty)\times[0,\pi]} |\nabla w|_{\tau}^2.
\end{eqnarray*}
Hence $|\dot{\gamma}_s(t)|_{\tau}\rightarrow 0$ uniformly in $t$
for $s\rightarrow\infty$.
\end{proof}

\vspace{1mm}

{\bf Case 1.2:} $r=\infty$.
In this case, again as in~\cite{geze10},
we define the rescaled sequence $(w_{\nu})$ by
\[ w_{\nu}(z):= u_{\nu}(z_{\nu}+z/R_{\nu})
\;\;\mbox{\rm for}\;\; z=x+iy\;\; \mbox{\rm with}\;\;
y\geq -y_{\nu}R_{\nu}.\]
Then $|\nabla w_{\nu}(0)|=1$, the Dirichlet energy of the $w_{\nu}$
is bounded by~$\pi$, and we have
the uniform estimate
\[ |\nabla w_{\nu}(z)| \leq 2 \;\; \mbox{\rm for all}\;\;
z\in\C\;\;\mbox{\rm with}\;\;
|z|\leq\varepsilon_{\nu}R_{\nu}\;\;\mbox{\rm and}\;\;
y\geq -y_{\nu}R_{\nu}.\]

\vspace{1mm}

{\bf Case 1.2.a:} The sequence $(w_{\nu}(0)=u_{\nu}(z_{\nu}))$ converges
(after passing to a subsequence).

Because of the uniform bound $|\nabla w_{\nu}|\leq 2$ on
the exhausting sequence
\[ K_{\nu}:=\{ z\in\C\co |z|\leq\varepsilon_{\nu}R_{\nu},\;
y\geq -y_{\nu}R_{\nu}\} \]
of compact subsets of~$\C$, the convergence of $(w_{\nu}(0))$
implies that we have a $C^0$-bound on~$w_{\nu}$ on the
compact set $K_{\nu}$, which allows us again to
apply~\cite[Theorem~B.4.2]{mcsa04}.
This now gives us a subsequence of $(w_{\nu})$
that converges in $C^{\infty}_{\loc}$ to
a non-constant holomorphic map $w\co\C\rightarrow\wtW$
with $E(w)\leq\pi$. So $w$ is a finite energy plane in the
sense of~\cite{hwz03}.

By \cite[Proposition~2.11]{hwz03} we now have the following
alternative:
\begin{itemize}
\item[(A1)] A sphere bubbles off: the image of $w$ is bounded, and
$w$ has a smooth extension over $\infty$ to a holomorphic sphere.
\item[(A2)] A plane bubbles off: the image of $w$ is unbounded, and
there exists an $r_0>0$ such that
\[ w(z)=:(a(z),f(z))\in (-\infty ,0]\times M_-\;\;\text{for}\;\; |z|\geq r_0.\]
\end{itemize}
Moreover, in the case of alternative (A2), there exists a
sequence $r_{\mu}\ra\infty$, $r_{\mu}\geq r_0$, and a negative number $T<0$
such that
\[ a(r_{\mu}\rme^{2\pi it})\ra -\infty\;\;\;\text{and}\;\;\;
\gamma_{\mu}(t):=f(r_{\mu}\rme^{2\pi it})\ra\gamma (Tt)\]
in $C^{\infty}(\R/\Z,(-\infty,0])$ and $C^{\infty}(\R/\Z,M_-)$,
respectively, for some $|T|$-periodic Reeb orbit $\gamma$ of~$\alpha_-$.

The next lemma is essentially contained in \cite[Theorem~31]{hofe93},
but we give a more direct proof in the present context.

\begin{lem}
\label{lem:period}
The period $|T|$ in alternative\/ {\rm (A2)} satisfies $|T|\leq\pi$.
\end{lem}

\begin{proof}
For each $\mu\in\N$ we choose a compactly supported function
\[ \tau_{\mu}\in C^{\infty}\bigl( (-\infty,\varepsilon),\R_0^+)\]
with the following properties:
\begin{itemize}
\item[(i)] $\tau_{\mu}'\geq 0$,
\item[(ii)] $\tau_{\mu}(s)=\rme^s$ for $s\in[0,\varepsilon)$,
\item[(iii)] $\tau_{\mu}=1-1/\mu$ on $a(\{ |z|=r_{\mu}\})$.
\end{itemize}
Notice that $\tau_{\mu}$ is an element of the $C^{\infty}$-closure of the
set $\TT$ of functions used to define the Hofer energy.

We then compute
\begin{eqnarray*}
E(w) & \geq & \int_{|z|\geq r_{\mu}} w^*\omega_{\tau_{\mu}}
     \;\;\; = \;\;\; \int_{|z|\geq r_{\mu}} w^*(\rmd (\tau_{\mu}\alpha_-))\\
     & =    & -\bigl(1-\frac{1}{\mu}\bigr)\,\int_{|z|=r_{\mu}} f^*\alpha_-\\
     & =    & -\bigl(1-\frac{1}{\mu}\bigr)\,\int_{\gamma_{\mu}}\alpha_-
     \;\;\; = \;\;\; -\bigl(1-\frac{1}{\mu}\bigr)\,
              \int_0^1 \alpha_-(\dot{\gamma}_{\mu}(t))\,\rmd t\\
     & \stackrel{\mu\ra\infty}{\lra} &
              -\int_0^1 \alpha_-(T\dot{\gamma}(Tt))\,\rmd t\\
     & =    & -T\;\;\; = \;\;\; |T|,
\end{eqnarray*}
where we have used the theorem of Stokes in the second line.
Since $E(w)\leq\pi$, this proves the lemma.
\end{proof}

This lemma shows that our assumption $\inf (\alpha_-)>\pi$
in Proposition~\ref{prop:compact} precludes alternative~(A2). 
In the exact case, where we only require $\inf_0 (\alpha_-)>\pi$,
we rule out (A2) as follows.
Define the collar neighbourhood $[0,\varepsilon )\times M_-\subset W$
of $M_-$ by the flow of the Liouville vector field $Y$ given by
$i_Y\rmd\lambda =\lambda$. Then $\lambda$ on $W$ and $\tau_{\mu}\alpha_-$
on $(-\infty ,0]\times M_-$ glue to a global primitive $\lambda_{\mu}$
of $\omega_{\tau_{\mu}}$ on~$\wtW$. Under alternative (A2),
the $2$-form $w^*\omega_{\tau_{\mu}}=w^*(\rmd\lambda_{\mu})$
on $\C$ would be compactly supported, hence
\[ \int_{|z|\geq\tau_{\mu}} w^*\omega_{\tau_{\mu}}\leq
\int_{\C}w^*(\rmd\lambda_{\mu}) =0\]
by the theorem of Stokes, which would imply $T=0$
by the computation in the preceding lemma.

In the exact case, alternative (A1) is likewise impossible,
since by Stokes there are no non-constant holomorphic spheres
in an exact symplectic manifold.

This concludes the discussion of Case 1.2.a, except for the
potential bubbling of spheres in the non-exact case.

\vspace{1mm}

{\bf Case 1.2.b:} The sequence $(w_{\nu}(0))$ is of the form
$w_{\nu}(0)=(a_{\nu}(0),f_{\nu}(0))\in (-\infty,0]\times M_-$
(in the notation of alternative~(A2)) with $a_{\nu}(0)\ra
-\infty$ (again possibly after passing to a subsequence).

In this case we use a trick from~\cite{hwz03} to produce a finite
energy plane in the symplectisation $(\R\times M_-,\rmd (\rme^s\alpha_-))$
of~$M_-$. Let $R_{\nu}'$ be the maximal radius
$\leq \varepsilon_{\nu}R_{\nu}$ such that with
$K_{\nu}$ as in Case~1.2.a and
\[ K_{\nu}':= K_{\nu}\cap\{ |z|\leq R_{\nu}'\}\]
we have
\[ w_{\nu}(K_{\nu}')\subset (-\infty,0]\times M_-.\]
Because of $\varepsilon_{\nu}R_{\nu}\ra
\infty$, the uniform estimate $|\nabla w_{\nu}|\leq 2$ on $K_{\nu}$,
and $a_{\nu}(0)\ra-\infty$, the mean value theorem implies
$R_{\nu}'\ra\infty$.

Now consider the shifted sequence
\[ (a_{\nu}-a_{\nu}(0),f_{\nu})\in C^{\infty}(K_{\nu}',\R\times M_-). \]
We continue to write $(w_{\nu})$ for this sequence. By the
compactness of $M_-$ the sequence $(w_{\nu}(0))$ in $\{0\}\times M_-$
has a convergent subsequence, so just as in Case~1.2.a we may
now apply \cite[Theorem~B.4.2]{mcsa04} to obtain
a finite energy plane $w\co\C\ra\R\times M_-$. Specifically,
with the Hofer energy now defined as
\[ E(w)=\sup_{\tau}\int w^*\rmd (\tau\alpha_-),\]
where the supremum is taken over the set
\[ \{\tau\in C^{\infty}(\R,[0,1])\co \tau'\geq 0\} ,\]
we have $E(w)\leq\pi$.

This places us, once again, in the setting of \cite[Proposition~2.11]{hwz03}.
In the symplectisation $(\R\times M_-,\rmd (\rme^s\alpha_-))$
the maximum principle holds, so alternatives (A1) and (A2)
are excluded; instead we must have the following, where $w=(a,f)$:
\begin{itemize}
\item[(A3)] Breaking: there exists a sequence $r_{\mu}\ra\infty$
and a positive number $T>0$ such that
\[ a(r_{\mu}\rme^{2\pi it})\ra\infty \;\;\;\text{and}\;\;\;
\gamma_{\mu}(t):=f(r_{\mu}\rme^{2\pi it})\ra\gamma (Tt)\]
in $C^{\infty}(\R/\Z,\R)$ and $C^{\infty}(\R/\Z,M_-)$,
respectively, for some $T$-periodic Reeb orbit $\gamma$ of~$\alpha_-$.
\end{itemize}

Notice that this $\gamma$ is now a {\em contractible\/}
periodic orbit, since the whole energy plane can be
projected into~$M_-$. Next we estimate the symplectic energy
by choosing $\tau$ to be identically equal to~$1$:
\[ E(w)\geq\int_{|z|\leq r_{\mu}} w^*(\rmd\alpha_-)
=\int_{|z|=r_{\mu}} f^*\alpha_-\stackrel{\mu\ra\infty}{\lra} T.\]

This implies that the assumption $\inf_0(\alpha_-)>\pi$
suffices to rule out alternative~(A3).

\vspace{1mm}

{\bf Case 2:} $z_0\in\Int\D$. This case is completely analogous
to Case 1.2.

\vspace{2mm}

Thus, at this point we have completed the proof of
Proposition~\ref{prop:compact} in the exact case,
and hence the proof of Theorem~\ref{thm:ball-exact}. In the
non-exact case it still remains to show that
no spheres can bubble off, i.e.\ that
alternative (A1) in Case~1.2.a or Case~2 never happens.

Since the maximum principle applies near the convex boundary
of~$\wtW$, all potential bubbling spheres have to be
disjoint from a neighbourhood of that boundary.
Moreover, our arguments have shown that no breaking
or bubbling-off of planes can occur, in particular
near the boundary. Therefore the compactness
result \cite[Theorem~10.2]{behwz03} for an almost complex
manifold without boundary and with cylindrical ends
applies to our situation (modulo a remark that
we shall make presently). That compactness result says that the
sequence $(u_{\nu})$ has a subsequence convergent to a holomorphic building
of height $k_-|1$ with $k_-\geq 0$. Any holomorphic building
coming from a disc and having height $k_-|1$ with $k_->0$ would contain at
least one finite energy plane, whose existence we have excluded.
So the limit is a holomorphic building
of total height~$1$. In other words, the subsequence is Gromov-convergent
to a stable $J$-holomorphic
map $\{ u^j\}_{j=0,\ldots,n}$ in the sense of
\cite[Definition~5.1.1]{mcsa04}. Here our labelling is chosen
such that $u^0$ is a $J$-holomorphic disc, and $u^1,\ldots, u^n$
are $J$-holomorphic spheres. The disc and spheres form
a bubble tree; in particular each sphere has at least one
point of intersection with some other sphere or the disc.

\begin{rem}
The compactness theorem from \cite{behwz03} only applies in the
case that $\alpha_-$ is non-degenerate (i.e.\ the linearised Poincar\'e
return map along closed Reeb orbits of $\alpha_-$, including
multiples, does not have an eigenvalue~$1$).

If $\alpha_-$ is a degenerate contact form with
$\inf (\alpha_-)>\pi$, we argue as in the final paragraph
of~\cite{ach05}. Choose a sequence of smooth functions
$f^{(\mu )}\co M_-\rightarrow\R^+$ converging in $C^{\infty}$ to
the constant function~$1$ and with $f^{(\mu )}\alpha_-$ non-degenerate
for all $\mu\in\N$. If $\inf (f^{(\mu )}\alpha_-)\leq\pi$ for all~$\mu$,
then the argument in \cite{ach05} would show that, likewise,
$\inf (\alpha_-)\leq\pi$, contradicting our assumption.
So we find a function $f$ arbitrarily $C^{\infty}$-close to
$1$ with $f\alpha$ non-degenerate and $\inf (f\alpha_-)>\pi$. This contact
form can be realised on the boundary $M_-$ of the cobordism $W$
by a small modification of $W$ in a collar neighbourhood
of $M_-\subset W$ (after adding a small piece
$(-\varepsilon ,0]\times M_-$ of the symplectisation
to~$W$). Then the whole argument (including the part that
follows below) can be applied to this modified~$W$, showing
$W$ to be a $4$-ball.
\end{rem}

Returning to our purported bubble tree,
we now compute with intersection numbers as in~\cite{ye98}
in order to show that such a bubble tree (with $n\geq 1$)
cannot exist. Arguing by contradiction, we assume
$n\geq 1$, i.e.\ that at least one sphere bubbles off.
Each disc $u_{\nu}$ represents a relative
homotopy class $A^{t_{\nu}}\in \pi_2(\wtW,
S^{t_{\nu}}\setminus\{ q^{t_{\nu}}_{\pm}\})$
of self-intersection number (as defined in \cite[Section~8]{geze10})
$A^{t_{\nu}}\bullet A^{t_{\nu}}=0$; see the proof of
\cite[Proposition~4.5]{geze10}.  Gromov convergence implies that, for
$\nu$ large enough, the homotopy class
of $u_{\nu}$ in $\pi_2(\wtW, S^3\setminus K)$ equals that
represented by the limiting bubble tree. For the purpose of computing
intersection numbers we may assume that the bubble tree and the
$u_{\nu}$ represent the same class $A\in\pi_2(\wtW,
S^{t_0}\setminus\{ q^{t_0}_{\pm}\})$ for some~$t_0$ (since for discs
in different levels we are back to classical intersection theory at interior
points).

By positivity of intersections \cite[Theorem~9.2]{geze10}
and $u_{\nu}\bullet u_{\nu}=0$ we have
\[ u^j\bullet u_{\nu}\geq 0\;\;\text{for}\;\; j=0,\ldots ,n.\]
Since the intersection product is a homotopy invariant, we have
\[ u^j\bullet A\geq 0\;\;\text{for}\;\; j=0,\ldots ,n.\]
From
\[ 0=A\bullet A=\sum_{j=0}^nu^j\bullet A\geq 0\]
we conclude $u^j\bullet A=0$, $j=0,\ldots ,n$.

Again by positivity of intersections we have $u^1\bullet u^k\geq 0$
for $k=0,2,\ldots ,n$, and at least one of these intersection
numbers (corresponding to a neighbour of $u^1$ in the bubble
tree) is positive. Hence $u^1\bullet u^1<0$.
Beware that the intersection number in~\cite{geze10} is weighted differently
from the standard intersection number of closed submanifolds, but the
inequality $u^1\bullet u^1<0$ remains true if $\bullet$
is now interpreted in that standard way.

Without loss of generality we may assume that $u^1$ is
simple, i.e.\ not multiply covered (otherwise apply the following
argument to the corresponding simple sphere).
Then the adjunction inequality~\cite[Theorem~2.6.4]{mcsa04} says that
\[ u^1\bullet u^1-c_1(u^1)+2\geq 0,\]
with equality if and only if $u^1$ is embedded.
By Remark~\ref{rems:J}~(3) we have $c_1(u^1)\geq 1$, and hence
$u^1\bullet u^1\geq -1$. 

We conclude that $u^1\bullet u^1= -1$. Then further
$c_1(u^1)=1$, and equality holds in the adjunction formula.
This means that $u^1$ is an exceptional sphere
in $(\wtW,\omega_{\tau})$ for any choice of
$\tau\in\TT$. If we take $\tau (s)=\rme^s$ on $(-\infty ,\varepsilon)$,
for instance, the vector field $\partial_s$ is a Liouville
vector field for $\omega_{\tau}$ on $(-\infty ,\varepsilon)\times M_-$,
and its flow can be used to push $u^1$ into $(W,\omega)$.
Exceptional spheres in $(W,\omega)$, however, are
excluded by assumption~(C1).

This finishes the proof of Proposition~\ref{prop:compact}
and hence that of Theorem~\ref{thm:ball}.
\end{proof}
\section{Symplectic fillings of $S^2\times S^1$}
\label{section:S2S1}
An obvious strong symplectic filling of
$S^2\times S^1\subset \R^3\times S^1$ with its
standard contact structure $\xist=\ker\bigl(\lamst|_{T(S^2\times S^1)}\bigr)$,
where
\[ \lamst:=\frac{1}{2} (x\,\rmd y-y\,\rmd x)+z\,\rmd\theta,\]
is given by
$(D^3\times S^1,\rmd x\wedge\rmd y+\rmd z\wedge\rmd\theta)$.
The following result is implicit in~\cite{elia90}; for
the uniqueness of the filling up to symplectic deformation
equivalence see~\cite{niwe11}.
For more on the topology
of symplectic and Stein fillings see~\cite[Chapter~12]{ozst04}.

\begin{thm}
Any minimal weak symplectic filling of $(S^2\times S^1,\xist)$
is diffeomorphic to $D^3\times S^1$.
\end{thm}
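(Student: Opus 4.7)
The plan is to adapt the Bishop disc strategy of the ball theorems, with the standard filling $(D^3\times S^1,\rmd x\wedge\rmd y+\rmd z\wedge\rmd\theta)$ playing the role that $(D^4,\omega_{\st})$ did for $(S^3,\xist)$. The starting point is to arrange, after modifying $\omega$ in a collar of the boundary, that a neighbourhood of $S^2\times S^1\subset\partial W$ is symplectomorphic to a neighbourhood of $\partial(D^3\times S^1)$ in the model, with an outward Liouville vector field inducing $\alst$. Because $H^2_{\mathrm{dR}}(S^2\times S^1)\neq 0$, this step is subtler than in the $S^3$ case: one uses the weak filling condition to find a closed $2$-form on the collar cohomologous to the model symplectic form restricted to the boundary, and then interpolates. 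One then chooses an $\omega$-compatible almost complex structure $J$ that agrees with the standard integrable $J$ on that collar, makes $S^2\times S^1$ $J$-convex, and is globally regular for holomorphic spheres.

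Next I would set up a moduli space $\WW$ of Bishop discs. For each $\theta_0\in S^1$ the sphere $S^2\times\{\theta_0\}$ is foliated, outside the two poles $(0,0,\pm 1,\theta_0)$, by the boundary circles of the horizontal model discs $\{x^2+y^2\leq 1-z_0^2\}\times\{(z_0,\theta_0)\}$. Looking for $J$-holomorphic discs with boundary on these spheres, in the appropriate relative homotopy class, normalised by three marked-point conditions on leaves of the induced characteristic foliation, yields an $S^1$-family of one-parameter moduli spaces. The truncated space $\WW^{\delta}$, consisting of those discs whose boundaries avoid a small neighbourhood of the two polar circles $\{(0,0,\pm 1)\}\times S^1$, is introduced exactly as in Section~\ref{section:proof}, and transversality gives it the structure of a $2$-manifold fibred over~$S^1$.

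The core of the proof is compactness of $\WW^{\delta}$, which is substantially easier than in the main theorems. Since there is no concave boundary component, no breaking into a symplectisation end can occur and no short Reeb orbit issue arises; Cases~1.2 and~2 of the proof of Proposition~\ref{prop:compact} become vacuous. The only surviving potential failures of compactness are interior disc bubbling near the boundary, which is excluded by the removal-of-singularities analysis of Case~1.1, and bubbling of holomorphic spheres, which is precluded by the minimality hypothesis via the intersection-number argument at the end of that proof. The $S^1$-parameter $\theta$ is passive throughout these estimates.

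Finally, the diffeomorphism is assembled as in the proof of the ball theorems. The evaluation map $\ev_1$ identifies $\WW^{\delta}$ with a closed $2$-disc bundle over~$S^1$, and the $\theta$-parametric analogue of Proposition~\ref{prop:evaluation} yields an embedding $\D\times(-1,1)\times S^1\to\wtW$ into the complement of the polar circles which coincides, near those circles, with the obvious standard embedding into $D^3\times S^1$; this extends by compactness to the desired diffeomorphism $D^3\times S^1\to W$. The main obstacle I expect is the first step: the weak-to-strong reduction must be arranged in a way compatible with minimality, and the nontriviality of $H^2_{\mathrm{dR}}(S^2\times S^1)$ makes this cohomological adjustment more delicate than in the $S^3$ setting.
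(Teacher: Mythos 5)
Your proposal diverges from the paper's argument at the very first step, and this divergence opens a genuine gap. You propose to \emph{reduce} the weak filling to a strong one near the boundary, i.e.\ to modify $\omega$ in a collar so that a neighbourhood of $S^2\times S^1$ becomes symplectomorphic to a neighbourhood of $\partial(D^3\times S^1)$ in the standard model, thereby recovering an analogue of condition~(J1). You correctly flag that $H^2_{\mathrm{dR}}(S^2\times S^1)\neq 0$ makes this delicate, but you do not show how to carry it out, and in fact there is no reason it should be possible: the weak filling condition $\omega|_{\xi_{\st}}>0$ does not force $\omega|_{T(S^2\times S^1)}$ to be exact, and when it is not, no collar modification can produce a strong convex boundary. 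The paper avoids this entirely. It works with the weak filling directly, imposing only conditions (J3) and~(J4) on~$J$ (boundary $J$-convexity and regularity for spheres), \emph{not}~(J1). Consequently there is no global identification with a model collar, and the Bishop discs are obtained locally near the singular circles $\{z=\pm1\}\times S^1$ of the characteristic foliations of the $S^\theta$, exactly as in Eliashberg's original filling technique.

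This has a downstream consequence you miss. Because there is no standard collar, the Bishop families emanating from the two singular circles a priori define \emph{two} possibly distinct families of relative homotopy classes $A^\theta_\pm$. The paper reconciles them by observing $A^\theta_+\bullet A^\theta_-=0$ and invoking positivity of intersections, so that discs from the two families are either disjoint or coincide and it suffices to fix the boundary condition by~$A^\theta_+$ alone. In your set-up this issue would evaporate \emph{if} the weak-to-strong reduction worked, because the single model family $z_0\in(-1,1)$ in the collar would pin down one homotopy class; but since that reduction is the gap, the homotopy-class bookkeeping remains unaddressed. One further minor discrepancy: the uniform energy bound is no longer $\pi$ but $\max_\theta\int_{S^2\times\{\theta\}}|\omega|$, since there is no normalisation of $\omega$ near the boundary. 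The remaining parts of your outline (truncated moduli space over $S^1$, compactness with sphere bubbles excluded by minimality and the intersection-number argument, $\ev_1$ a diffeomorphism onto an annulus, and the final gluing) do match the paper's intent.
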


\begin{proof}
We only present the main outline of the argument;
the details are then completely analogous to those used
for the holomorphic filling of $(\wtW,J)$ in Section~\ref{section:proof}.

The contact manifold $(S^2\times S^1,\xist)$ is foliated by
the $2$-spheres $S^{\theta}:=S^2\times\{\theta\}$, whose characteristic
foliation looks like that of the level spheres $S^t$ in
Section~\ref{section:proof}. The singular points $q^{\theta}_{\pm}$
of these characteristic foliations form two circles $\{z=\pm 1\}\times S^1$.

Suppose $(W,\omega )$ is a weak symplectic filling
of $(S^2\times S^1,\xist)$. Choose an almost complex
structure $J$ on $W$ that satisfies conditions (J3) and~(J4).
In order to define $\theta$-level Bishop discs,
we formulate a condition analogous to (D2) by
choosing three leaves of the characteristic foliation
of the $S^{\theta}$ (in an $S^1$-invariant family, say).
As regards the homotopical condition~(D1), {\em a priori\/}
we may have to consider two families of relative homotopy
classes $A^{\theta}_{\pm}$ defined by standard Bishop discs
in $(W,J)$ near the singular points~$q^{\theta}_{\pm}$.
Because of $A^{\theta}_+\bullet A^{\theta}_-=0$ and positivity
of intersections, however, it follows that any two
$\theta$-level Bishop discs in these two families are
either disjoint or they coincide. This implies that it suffices
to formulate (D1) only in terms of~$A^{\theta}_+$, say.

The corresponding truncated moduli space can then be shown to be
diffeomorphic to $S^1\times D^1$ via the evaluation map $\ev_1$ just
as in Proposition~\ref{prop:evaluation}, and the proof then concludes
like that of the ball theorems. The compactness argument remains
unchanged; a suitable replacement for the bound $\pi$ in the
energy estimate in Proposition~\ref{prop:energy-bound}
is provided by ${\displaystyle\max_{\theta}}
\int_{S^2\times\{\theta\}}|\omega|$.
\end{proof}

We close with a computation of the capacities $c,c_0$
in this context.

\begin{prop}
Let $(V,\omega)$ be a minimal strong symplectic
filling of $(S^2\times S^1,\alpha_{\st})$,
i.e.\ there is a Liouville vector field $Y$ defined near
and pointing outwards along $\partial V=S^2\times S^1$
such that $(i_Y\omega)|_{T(S^2\times S^1)}=\alpha_{\st}$.
Then $c(V,\omega)=\pi$.
If, moreover, $\omega=\rmd\lambda$ with
$\lambda|_{T(S^2\times S^1)}=\alpha_{\st}$, then
$c_0(V,\lambda)=\pi$.
\end{prop}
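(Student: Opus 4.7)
The plan is to follow the proof of $c(B^4)=c_0(B^4)=\pi$ in Proposition~\ref{prop:capacity}, with $(S^3,\alst)$ replaced by $(S^2\times S^1,\alst)$ throughout. First I would compute the Reeb vector field of $\alst$ on $S^2\times S^1$ in the coordinates $(\phi,z,\theta)$, obtaining
\[
R = \frac{2}{1+z^2}\partial_\phi + \frac{2z}{1+z^2}\partial_\theta.
\]
The shortest closed orbit is the equator $\{z=0\}\times\{\theta\}$, of period~$\pi$; it bounds a disc in $S^2\times\{\theta\}$ and so is contractible in $S^2\times S^1$. All other closed orbits --- the polar circles and those on the tori $\{z=\text{const}\}$ for rational $z\in(-1,1)\setminus\{0\}$ --- have period at least $2\pi$. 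Hence $\inf(\alst)=\inf_0(\alst)=\pi$.

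For the lower bounds I would use the outward Liouville vector field $Y$ near $\partial V$ (globally defined via $i_Y\omega=\lambda$ in the exact case) to form a collar of $\partial V$ on which $\omega=\rmd(\rme^s\alst)$. Pushing $\partial V$ inward by time $-\varepsilon$ produces a (restricted) contact type embedding of $(S^2\times S^1,\rme^{-\varepsilon}\alst)$ into $V$, whose shortest (contractible) Reeb orbit has period $\rme^{-\varepsilon}\pi$. Taking $\varepsilon\to 0$ yields $c(V,\omega)\geq\pi$ and $c_0(V,\lambda)\geq\pi$.

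For the upper bounds, let $j\co(M,\alpha)\hookrightarrow V$ be a (restricted) contact type embedding whose Liouville vector field $Y_M$ points out of the compact region $V'\subset V$ enclosed by $j(M)$ (the standard convention, consistent with the lower-bound construction). Then $W:=V\setminus\Int V'$ is a minimal symplectic cobordism, minimality being inherited from $V$, with concave boundary $(j(M),\alpha)$ and strongly convex boundary $(S^2\times S^1,\alst)$; in the exact case $\lambda|_W$ restricts correctly on both boundary components. Applying the cobordism analogue of the filling theorem of Section~\ref{section:S2S1}, I would obtain the dichotomy: either $\inf(\alpha)\leq\pi$ (respectively $\inf_0(\alpha)\leq\pi$), or $W\cong D^3\times S^1$. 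The latter is ruled out because $\partial W=j(M)\sqcup(S^2\times S^1)$ has two components while $D^3\times S^1$ has connected boundary.

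The main obstacle is establishing this cobordism version of the $(S^2\times S^1)$-ball theorem --- specifically, verifying the analogue of Proposition~\ref{prop:energy-bound} with the bound still equal to~$\pi$. For a $\theta$-level Bishop disc $u$ in the cylindrical completion $\wtW$, the boundary $u(\partial\D)$ separates $S^2\times\{\theta\}$ into two caps; taking $D^\theta$ to be the cap containing the singular point $q^\theta_+=(0,0,1,\theta)$, one has $\int_{\D}u^*\omega_\tau=\int_{D^\theta}\rmd\alst$. Since $\rmd\alst|_{S^2}=z\,\rmd\phi\wedge\rmd z$ integrates to $+\pi$ over the upper hemisphere $\{z\geq 0\}$ and to $-\pi$ over the lower, the maximum of $\int_{D^\theta}\rmd\alst$ over all caps containing $q^\theta_+$ is precisely~$\pi$. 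The remaining ingredients --- transversality and compactness of the moduli space of Bishop discs and the evaluation-map diffeomorphism --- carry over from Sections~\ref{section:proof} and~\ref{section:compact} with the cosmetic adaptations indicated in Section~\ref{section:S2S1}.
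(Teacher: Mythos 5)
Your proposal is correct and follows essentially the same route as the paper: lower bound by Liouville-flowing the boundary inward to produce contact type embeddings with shortest (contractible) period approaching $\pi$, upper bound by applying the $(S^2\times S^1)$-analogue of the ball-theorem dichotomy to the cobordism $W=V\setminus\Int V'$, ruling out $W\cong D^3\times S^1$ by the disconnectedness of $\partial W$, and verifying the energy bound $\pi$ directly from $\rmd\alst|_{TS^\theta}=z\,\rmd\phi\wedge\rmd z$. The one step you wave past --- that the Liouville field of a contact type embedding $j(M)$ must point out of the bounded region $V'$ (making $j(M)$ concave for $W$) --- is also asserted without elaboration in the paper; it follows because the alternative would make $W$ a cobordism with only convex ends, forcing $W\cong D^3\times S^1$ by the filling theorem of Section~\ref{section:S2S1}, contradicting the disconnected boundary.
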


\begin{proof}
By assumption the symplectic form $\omega$ is the standard
one near $S^2\times S^1$, so in this case we
actually have the bound $\pi$ in the energy estimate
in Proposition~\ref{prop:energy-bound}.
The filling result above implies that any closed hypersurface $M$
in $\Int V$ is separating. The
analogues of the ball theorems for the resulting cobordism
from $M$ (which has to be a concave end)
to $S^2\times S^1$ show that the capacities $c,c_0$ are at
most equal to~$\pi$. On the other hand, the Reeb vector field
of $\alpha_{\st}$ is given by
\[ R_{\alpha_{\st}}=\frac{2}{1+z^2}(x\,\partial_y-y\,\partial_x+z\,
\partial_{\theta}),\]
whose minimal period is equal to~$\pi$, corresponding to the
contractible orbits
\[ \gamma (t)=(\cos 2t,\sin 2t,0,\theta_0)\in S^2\times S^1\subset
\R^3\times S^1,\; t\in [0,\pi].\]
For $r<1$ sufficiently close to $1$ we have a (strict)
contact type embedding of $S^2_r\times S^1$ (with the contact form
induced by $\lamst$) into $(V,\omega)$. So the lower bound $\pi$ on the
capacities in question follows from an exhaustion argument as
in the proof of Proposition~\ref{prop:capacity}.
\end{proof}

\begin{ack}
We thank Peter Albers, Max D\"orner, Felix Schlenk and Chris Wendl
for comments on an earlier version of this paper, and Bernd Kawohl for
reference~\cite{jung01}.
\end{ack}

{\footnotesize
\section*{Appendix}
\label{section:appendix}
Here are some corrections to our
previous paper~\cite{geze10}.

(1) In the first case, step~(ii), of the proof of Proposition~6.1,
$u$ should be replaced by $w$ in three instances.

(2) In the second case of the proof of Proposition~6.1, the uniform
estimate $|\nabla w_{\nu}(z)|\leq 2$ holds
on the set $K_{\nu}$ defined in Case 1.2.a of the present paper.

(3) Proposition~7.3 states that the space $\BB$ of level-preserving
discs $(\D,\partial\D )\ra (\R^4, S^t\setminus\{q^t_{\pm}\})$,
$t\in (-1,1)$, with the homotopical boundary condition~(D1)
from Section~\ref{section:proof} above,
is a Banach manifold. This statement is correct; the proof in \cite{geze10}
shows that it is a Banach manifold modelled on the Banach space
of $W^{1,p}$-sections $\eta$ of $u^*(T\R^4,TS^3)$, where
$u$ is a $W^{1,p}$-map $(\D,\partial\D )\ra (\R^4,S^3\setminus K)$,
that satisfy the additional requirement
\[ \langle\nabla H\circ u|_{\partial\D},\eta|_{\partial\D}\rangle\equiv
\text{const.},\]
with $H$ the height function on $S^3$
as in Section~\ref{section:proof}.

In the proof of that Proposition~7.3 in \cite{geze10} we tried to show more,
namely, that $\BB$ is a Banach submanifold of the Banach manifold
$\CC$ of all $W^{1,p}$-maps $(\D,\partial\D)\ra(\R^4,S^3\setminus K)$
satisfying the corresponding homotopical boundary condition.
This would require the subspace $T_u\BB\subset T_u\CC$ to split.
Since solutions of the boundary value problem (P) in \cite{geze10}
need not be of class $W^{1,p}$, our argument does not prove the existence of
a splitting. This stronger statement, however, is never used in \cite{geze10}
or the present paper.
}

\end{document}